\documentclass[preprint,12pt,number]{elsarticle}

\usepackage{amsmath,amssymb,amsthm,mathtools}
\usepackage[T1]{fontenc}
\usepackage{lmodern}

\newtheorem{theorem}{Theorem}[section]

\newtheorem{lemma}[theorem]{Lemma}

\newtheorem{corollary}[theorem]{Corollary}
\newtheorem{remark}[theorem]{Remark}
\newtheorem{definition}[theorem]{Definition}

\newcommand{\D}{\mathbb D}

\newcommand{\E}{\mathbb E}
\newcommand{\eps}{\varepsilon}

\journal{Journal of Functional Analysis}

\begin{document}

\begin{frontmatter}

\title{Randomized second order Riesz projections on the Hamming cube}

\author[pku]{Yiming Chen}
\ead{ymchenmath@math.pku.edu.cn}

\author[hkust]{Guozheng Dai}
\ead{guozhengdai@ust.hk}

\address[pku]{School of Mathematical Sciences, Peking University}
\address[hkust]{Department of Mathematics, Hong Kong University of Science and Technology}

\begin{abstract}

In this paper, we improve the arbitrary Banach space \(n \log n\) bound of Ivanisvili--Volberg \cite{IvanisviliVolberg2022} for the  second order projection bound to the order \(\sqrt{n}\) bound.

Moreover, we study the lower Riesz estimate with the pointwise square gradient, and prove a fixed chaos characterization: on every fixed homogeneous Walsh chaos $H_k$, the dimension free estimate
\[
   \|\Delta^{1/2}f\|_{L^p(\Omega_n;X)}
   \lesssim_{p,k,X}
   \||\nabla f|_X\|_{L^p(\Omega_n)}
\]
holds for all $n$ if and only if $X$ has Rademacher type $2$.

We also consider an exact tail space norm of the analytic paraproduct $T_\varphi g(z)=\int_0^z g(\zeta)\varphi'(\zeta)\,d\zeta$ on Banach valued \(H^\infty\) spaces. A matching lower bound of Volberg \cite{Volberg2024}

\[
\|T_\varphi:H_d^\infty(\mathbb D;Y)\to H^\infty(\mathbb D;Y)\|
\asymp_{\alpha,\varphi} d^{-\alpha}
\]
under a nondegenerate boundary singularity assumption is established.

\end{abstract}

\end{frontmatter}

\section{Introduction}

Analysis on the discrete cube $\Omega_n=\{-1,1\}^n$  provides a model in which harmonic analysis, probability, and Banach space geometry meet in a particularly transparent way.  The Walsh characters diagonalize the number operator $\Delta$, the heat semigroup $e^{-t\Delta}$ has an elementary probabilistic representation, and many analytic inequalities reduce to questions about the interaction between Fourier-Walsh multipliers and random signs.  In the scalar setting this circle of ideas is closely connected with hypercontractivity, logarithmic Sobolev inequalities, and concentration on product spaces; see, for instance, the classical works of Beckner, Bonami, and Gross, as well as the Poincaré and concentration inequalities of Bobkov--G\"otze, Talagrand, and Ben-Efraim--Lust-Piquard \cite{Beckner1975,Bonami1970,GrossLogSobolev1975,GrossClifford1975,BobkovGotze1999,Talagrand1995,BenEfraimLustPiquard2008}.  For background on the Fourier-Walsh calculus on the cube we refer to \cite{ODonnell2014}.

The Banach-valued theory is more delicate because estimates that are dimension free for scalar- or Hilbert-valued functions may depend on the geometry of the target space.  Type, cotype, $K$-convexity, and related martingale properties enter naturally in this problem, the relevant Banach-space background goes back to Kwapien, Maurey--Pisier, Kahane, and Pisier, and is systematically studied in \cite{Kahane1985,Kwapien1972,LedouxTalagrand1991,MaureyPisier1976,Pisier1986,Pisier2016}.  On the cube and in closely related discrete settings, Riesz transforms associated with the number operator were studied by Meyer and Lust-Piquard, with later developments for discrete groups and second order transforms in \cite{Meyer1984,LustPiquard1998,LustPiquard2004,DomelevoPetermichl2014}.  More recently, Pisier-type inequalities on the discrete cube were refined by Hyt\"onen--Naor and by Ivanisvili--van Handel--Volberg, while Ivanisvili--Volberg developed Banach-valued Riesz and Pisier inequalities for singular integrals on the Hamming cube \cite{HytonenNaor2013,IvanisviliVanHandelVolberg2020,IvanisviliVolberg2022}.  Volberg's subsequent work on tail spaces and Bernstein--Markov inequalities provides the analytic background for several of the questions considered here \cite{Volberg2024}.

The main result of this paper is a $\sqrt n$ upper bound for randomized second order Riesz projections with arbitrary Banach space.  More precisely, for every Banach space $X$, every $1\le p<\infty$, every $n\ge1$, and every $g:\Omega_n\to X$, we prove

\begin{equation}\label{equ-main-intro-1}
   \left(\E_{\eps,\delta}\left\|
 \sum_{j=1}^n \delta_j\Delta^{-1}D_jg(\eps)
 \right\|_X^p\right)^{1/p}
 \le C_p\sqrt n\,\|g\|_{L^p(\Omega_n;X)}. 
\end{equation}
 
Here $D_j$ are the Walsh derivatives, $\Delta=\sum_{j=1}^nD_j$, and $\Delta^{-1}$ is taken on the mean zero Walsh subspace.  By duality, when $1<p<\infty$ \eqref{equ-main-intro-1} gives the corresponding second order inequality for vector fields, and improves general Banach-space upper bound of order $n\log n$ in Ivanisvili--Volberg \cite{IvanisviliVolberg2022} for the same estimate, and \eqref{equ-main-intro-1} attains the order suggested by the known worst case behavior of the Hyt\"onen--Naor example \cite{HytonenNaor2013,IvanisviliVolberg2022}.  The proof uses only a biased sign heat representation of $P_tD_j$ together with scalar moment estimates.

The same method gives an anisotropic form.  Given positive weights $a_1,\ldots,a_n$, let $\Delta_a W_A=\left(\sum_{j\in A}a_j\right)W_A.$ We prove a weighted upper bound for the operators $a_j\Delta_a^{-1}D_j$ with a heat kernel constant $\Lambda_p(a)$ depending on the distribution of the time scales $a_j^{-1}$.  In particular, for $1\le p\le2$ the constant is bounded by
\[
 \Lambda_2(a)=\int_0^\infty
 \left(\sum_{j=1}^n \frac{a_j^2}{e^{2a_jt}-1}\right)^{1/2}\,dt,
\]
which equals $(\pi/2)\sqrt n$ in the isotropic case.  The result shows that the inverse $\Delta_a^{-1}$ couples all active frequencies through the weighted sum $\sum_{j\in A}a_j$.% is not merely a coordinate-subset refinement of the unweighted theory; 

%Secondly, we also show that any lower Riesz estimate of the form
%\[
% \|\Delta^{1/2}f\|_{L^p(\Omega_n;X)}
% \le C_p\,\||\nabla f|_X\|_{L^p(\Omega_n)}
%\]
%forces the target space $X$ to have Rademacher type $2$.  This obstruction is already present on every fixed homogeneous Walsh chaos, so it is not a low-degree artifact, it may be viewed as an interesting comparison with the finite cotype condition in \cite{IvanisviliVolberg2022}.  

The second result concerns lower Riesz estimates.  Ivanisvili--Volberg \cite{IvanisviliVolberg2022} consider a lower Riesz problem with a randomized gradient,
\[
   \left(\mathbb E_\delta
   \left\|\sum_{j=1}^n\delta_jD_jf\right\|_{L^p(\Omega_n;X)}^p
   \right)^{1/p},
\]
and ask whether finite cotype may suffice for that estimate \cite{IvanisviliVolberg2022}.  In this paper, we consider the square gradient quantity discussed in Volberg \cite{Volberg2024}
\[
   \||\nabla f|_X\|_{L^p(\Omega_n)}
   =
   \left\|
   \left(\sum_{j=1}^n\|D_jf\|_X^2\right)^{1/2}
   \right\|_{L^p(\Omega_n)}.
\]

For this square gradient version we prove a sharp fixed chaos result.  If $H_k(\Omega_n;X)$ denotes the $k$-th homogeneous Walsh chaos, then for every fixed $k\ge1$, $f\in H_k(\Omega_n;X)$ and $1<p<\infty$ the best dimension free constant in
\[
   \|\Delta^{1/2}f\|_{L^p(\Omega_n;X)}
   \le C\,\||\nabla f|_X\|_{L^p(\Omega_n)},
\]
is equivalent, up to constants depending only on $p$ and $k$, to the Rademacher type $2$ constant $T_2(X)$.  Thus type $2$ is the Banach space condition on every fixed homogeneous Walsh chaos.  %The lower bound is the elementary block construction already visible in the first chaos, while the converse uses the Banach-valued decoupling theorem of de la Pe\~na and Montgomery-Smith for Rademacher chaoses \cite{delaPenaMontgomerySmith1995}.

%This gives a precise interpretation of the finite-cotype obstruction.  Since spaces such as $\ell_r$, $1<r<2$, have finite cotype, indeed cotype $2$, but fail to have Rademacher type $2$, finite cotype cannot imply the pointwise square-gradient lower Riesz estimate even on bounded homogeneous levels.  However, this statement should not be read as a negative solution of the randomized-gradient lower Riesz problem of Ivanisvili--Volberg; the randomized and square-gradient right-hand sides are different in general Banach spaces.

Moreover, we prove a sharp paraproduct tail estimate under the standard analytic corner asymptotics.  Namely, for the operator $T_\varphi g(z)=\int_0^z g(\zeta)\varphi'(\zeta)\,d\zeta$ acting on the tail space $H_d^\infty(\mathbb D;Y)$, the norm is of order $d^{-\alpha}$ when $\varphi$ has a corner singularity of exponent $\alpha\in(0,1)$.  The proof uses classical one variable function theory and the Lehman--Warschawski corner expansion, in the spirit of the analytic framework of Coifman--Meyer and standard Hardy-space references \cite{CoifmanMeyer1978,Duren1970,Garnett2007,Lehman1957,Pommerenke1992,Warschawski1942}.  %Consequently, the analytic paraproduct step in Volberg's proof cannot by itself improve the decay from $d^{-\alpha}$ to $d^{-1}$ when $\alpha<1$.

Finally, we consider a structural iteration principle for Bernstein--Markov estimates on the cube. A first order result bounds the square gradient $|\nabla f|_X$ for Banach valued Walsh polynomials under type assumptions on $X^*$ and uses Pisier's holomorphic-semigroup machinery \cite{Pisier1982,Volberg2024}.  We show that once such a first order estimate is available uniformly for finite Hilbertian sums $\ell_2^M(X)$, it automatically iterates to higher order Hilbertian square functions.  Thus no additional Banach space geometry is needed to pass from the first-order Bernstein--Markov estimate to the ordered and distinct $k$-th order square gradient estimates, the cost is the $k$-fold product of the first order constant.

The paper is organized as follows.  Section 2 introduces the preliminary and notations used throughout.  Section 3 proves the $\sqrt n$ estimate for randomized second order Riesz projections and derives the dual form, and establishes the anisotropic weighted extension.  Section 4 proves the type $2$ obstruction for lower Riesz estimates, including a fixed homogeneous chaos version.  Section 5 proves the sharp paraproduct tail estimate for analytic corner maps, and proves the Bernstein--Markov iteration principle.

\section{Preliminaries on the Hamming cube}
\label{sec:preliminaries}

We introduce the notation and basic facts used throughout the paper.  All finite cubes are equipped with their uniform probability measures, and all expectations are normalized.  Biased product measures used for heat-semigroup representations will be specified separately.  The letter $C$ denotes a finite positive constant whose value may change from line to line.  We write $A\lesssim B$ if $A\leq CB$, and $A\asymp B$ if both $A\lesssim B$ and $B\lesssim A$ hold. %; subscripts indicate the parameters on which it may depend

\subsection{Walsh expansions and discrete derivatives}

For an integer $n\geq1$, let $\Omega_n:=\{-1,1\}^n$, equipped with the uniform probability measure.  We write elements of $\Omega_n$ as
$\eps=(\eps_1,\ldots,\eps_n)$, and expectations with respect to $\eps$ as $\E_\eps$.  If $A\subseteq\{1,\ldots,n\}$, the corresponding Walsh character is
\[
 W_A(\eps)=\prod_{j\in A}\eps_j,
 \qquad W_\varnothing\equiv1.
\]
For a Banach space $X$, every function $f:\Omega_n\to X$ has the Fourier--Walsh expansion
\[
 f=\sum_{A\subseteq\{1,\ldots,n\}} \widehat f(A)W_A,
\]
where$ \widehat f(A)=\E_\eps W_A(\eps)f(\eps)\in X.$ The degree of $f$ is the largest $|A|$ for which $\widehat f(A)\neq0$.  

We denote by
\[
 \mathcal P_d(\Omega_n;X)
 =\{f:\Omega_n\to X:\widehat f(A)=0\text{ whenever }|A|>d\}
\]
the space of $X$-valued Walsh polynomials of degree at most $d$, and by
\[
 \mathcal H_k(\Omega_n;X)
 =\{f:\Omega_n\to X:\widehat f(A)=0\text{ whenever }|A|\neq k\}
\]
the $k$-th homogeneous Walsh chaos.

For $1\leq j\leq n$, the Walsh derivative $D_j$ is the Fourier multiplier
\[
 D_j W_A={\bf 1}_{\{j\in A\}}W_A.
\]
Equivalently, if $\eps^{(j)}$ denotes the point obtained from $\eps$ by flipping the $j$-th coordinate, then
\[
 D_jf(\eps)=\frac{f(\eps)-f(\eps^{(j)})}{2}.
\]
Thus $D_j^2=D_j$, the operators $D_i$ and $D_j$ commute, and for
$A\subseteq\{1,\ldots,n\}$ we shall write $D_A=\prod_{j\in A}D_j,$ and $D_\varnothing=I.$ With this convention,
\[
 D_AW_B={\bf 1}_{\{A\subseteq B\}}W_B.
\]
In particular, the Walsh multipliers $D_j$ and $D_A$ never increase Walsh degree.

\subsection{Number operators, inverse operators, and heat semigroups}

The number operator, or Hamming cube Laplacian, is $\Delta=\sum_{j=1}^nD_j,$ then $\Delta W_A=|A|W_A.$ We define $\Delta^{1/2}W_A=|A|^{1/2}W_A\quad(A\neq\varnothing),$ and $\Delta^{1/2}W_\varnothing=0,$ and interpret $\Delta^{-1}$ as the inverse of $\Delta$ on the mean zero Walsh subspace and as zero on constants:
\[
 \Delta^{-1}W_A=|A|^{-1}W_A\quad(A\neq\varnothing),
\]
and $$\Delta^{-1}W_\varnothing=0.$$

Thus, if $f\in\mathcal H_k(\Omega_n;X)$ with $k\geq1$, then $\Delta f=kf, \Delta^{1/2}f=\sqrt{k}\,f.$ We also use weighted number operators.  Given weights
$a=(a_1,\ldots,a_n)\in(0,\infty)^n$, set $\Delta_a=\sum_{j=1}^n a_jD_j,$ $\Delta_a W_A=a(A)W_A,$ $a(A)=\sum_{j\in A}a_j.$ Again,
\[
 \Delta_a^{-1}W_A=a(A)^{-1}W_A\quad(A\neq\varnothing),
 \qquad
 \Delta_a^{-1}W_\varnothing=0.
\]
For $t\geq0$, the unweighted and weighted heat semigroups are $P_t=e^{-t\Delta}, P_t^a=e^{-t\Delta_a},$ so that
\[
 P_tW_A=e^{-t|A|}W_A,
 \qquad
 P_t^aW_A=e^{-t a(A)}W_A.
\]

The heat semigroup has a useful biased-sign representation.  For $t>0$, let
$\xi(t)=(\xi_1(t),\ldots,\xi_n(t))$ be independent signs with
\[
 \mathbb P\{\xi_j(t)=1\}=\frac{1+e^{-t}}2,
 \qquad
 \mathbb P\{\xi_j(t)=-1\}=\frac{1-e^{-t}}2.
\]
Then
\[
 P_tf(\eps)=\E_\xi f(\eps\xi(t)),
\]
where multiplication is coordinatewise.  More generally, for the weighted semigroup we choose independent signs $\xi^a_j(t)$ satisfying
\[
 \mathbb P\{\xi^a_j(t)=1\}=\frac{1+e^{-a_jt}}2,
 \qquad
 \mathbb P\{\xi^a_j(t)=-1\}=\frac{1-e^{-a_jt}}2,
\]
so that $\mathbb E\xi^a_j(t)=e^{-a_jt}$.  With
$\xi^a(t)=(\xi^a_1(t),\ldots,\xi^a_n(t))$, one has
\[
 P_t^af(\eps)=\E_\xi f(\eps\xi^a(t)).
\]
In particular, $P_t$ and $P_t^a$ are contractions on $L^p(\Omega_n;X)$ for $1\leq p\leq\infty$.

We repeatedly use the following derivative form of the same representation.

\begin{lemma}
\label{lem:biased-sign-derivative}
Let $a_1,\ldots,a_n>0$, $t>0$, and $1\leq j\leq n$.  Put $Z_j^a(t)=\frac{\xi_j^a(t)-e^{-a_jt}}{\sqrt{1-e^{-2a_jt}}}.$ Then $\mathbb E Z_j^a(t)=0$, $\mathbb E|Z_j^a(t)|^2=1$, and for every Banach space $X$ and every $g:\Omega_n\to X$,
\[
 P_t^aD_jg(\eps)
 =\frac{e^{-a_jt}}{\sqrt{1-e^{-2a_jt}}}
 \E_\xi\bigl[Z_j^a(t)g(\eps\xi^a(t))\bigr].
\]
In the isotropic case $a_1=\cdots=a_n=1$, this gives the corresponding formula for $P_tD_j$.
\end{lemma}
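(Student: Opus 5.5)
The proof is a direct computation. We first check the moments of $Z_j^a(t)$, and then prove the formula on Walsh characters and extend it by linearity.

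\emph{Moments.} Write $q=e^{-a_jt}\in(0,1)$. Since $\mathbb E\xi_j^a(t)=q$, the numerator $\xi_j^a(t)-q$ has mean zero, so $\mathbb E Z_j^a(t)=0$. Since $(\xi_j^a(t))^2=1$, its variance is $\mathbb E(\xi_j^a(t))^2-q^2=1-q^2$. Dividing by $\sqrt{1-q^2}$, which is positive because $a_j,t>0$, gives $\mathbb E|Z_j^a(t)|^2=1$.

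\emph{Reduction to characters.} Both sides of the identity are linear in $g$ and commute with the Walsh expansion $g=\sum_A\widehat g(A)W_A$ with coefficients in $X$. The expectation $\E_\xi$ is a finite sum, so it commutes with vector coefficients. It therefore suffices to take $g=W_A$ with scalar coefficient.

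\emph{Computation for $g=W_A$.} The left side is
\[
P_t^aD_jW_A=\mathbf 1_{\{j\in A\}}e^{-ta(A)}W_A .
\]
On the right we have $W_A(\eps\xi^a(t))=W_A(\eps)\prod_{i\in A}\xi_i^a(t)$. Because the coordinates $\xi_i^a(t)$ are independent, and $Z_j^a(t)$ depends only on $\xi_j^a(t)$, the expectation factorises:
\[
\E_\xi\bigl[Z_j^a(t)\,W_A(\eps\xi^a(t))\bigr]
=W_A(\eps)\,\Bigl(\prod_{i\in A\setminus\{j\}}e^{-a_it}\Bigr)\,\mathbb E\bigl[Z_j^a(t)\,\xi_j^{\mathbf 1_{j\in A}}\bigr],
\]
where $\xi_j=\xi_j^a(t)$. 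There are two cases.
\begin{itemize}
\item If $j\notin A$, the last factor is $\mathbb E Z_j^a(t)=0$, which matches the left side.
\item If $j\in A$, the last factor is
\[
\mathbb E\bigl[Z_j^a(t)\,\xi_j^a(t)\bigr]=\frac{\mathbb E\xi_j^2-q\,\mathbb E\xi_j}{\sqrt{1-q^2}}=\frac{1-q^2}{\sqrt{1-q^2}}=\sqrt{1-q^2}.
\]
Multiplying by the prefactor $q/\sqrt{1-q^2}$ gives $q=e^{-a_jt}$. Combined with the product over $A\setminus\{j\}$, this is $e^{-ta(A)}W_A(\eps)$, as required.
\end{itemize}

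The isotropic case follows by setting all $a_i=1$. There is no genuine obstacle in this argument. The only care needed is that $t>0$, so that $q<1$ and the normalisation $\sqrt{1-q^2}$ is nonzero.
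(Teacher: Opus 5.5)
Your proposal is correct and follows the paper's proof essentially verbatim. Both reduce by linearity to $g=W_Ax$, factor the expectation using independence of the coordinates of $\xi^a(t)$, and split into the cases $j\notin A$ (where $\mathbb E Z_j^a(t)=0$ kills the term) and $j\in A$ (via $\mathbb E[Z_j^a(t)\xi_j^a(t)]=\sqrt{1-e^{-2a_jt}}$); your explicit check of $\mathbb E Z_j^a(t)=0$ and $\mathbb E|Z_j^a(t)|^2=1$ fills in a step the paper leaves implicit.
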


\begin{proof}
By linearity it suffices to test the identity on $g(\eps)=W_A(\eps)x$, where $x\in X$.  If $j\notin A$, both sides vanish, since $D_jW_A=0$ and $Z_j^a(t)$ is independent of $\prod_{i\in A}\xi_i^a(t)$ with mean zero.  If $j\in A$, then
\begin{equation}\label{equ-bsrP}
\E_\xi\bigl[Z_j^a(t)W_A(\eps\xi^a(t))x\bigr]
 =W_A(\eps)x\,\E[Z_j^a(t)\xi_j^a(t)]
   \prod_{i\in A\setminus\{j\}}\E\xi_i^a(t). 
\end{equation}
 
Since $\E[Z_j^a(t)\xi_j^a(t)]=\sqrt{1-e^{-2a_jt}}$, the right hand side of \eqref{equ-bsrP} becomes
\[
 W_A(\eps)x\,\sqrt{1-e^{-2a_jt}}
 e^{-t(a(A)-a_j)}.
\]
Multiplying by $e^{-a_jt}/\sqrt{1-e^{-2a_jt}}$ yields
$$e^{-ta(A)}W_A(\eps)x=P_t^aD_j(W_Ax)(\eps).$$
\end{proof}

The inverse operators may be represented by the semigroups on Walsh characters:
\[
 \Delta^{-1}D_j=
 \int_0^\infty P_tD_j\,dt,
 \qquad
 a_j\Delta_a^{-1}D_j=
 \int_0^\infty a_jP_t^aD_j\,dt.
\]
Indeed, both identities are immediate on each Walsh character and then extend by linearity.

\subsection{Square gradients and higher order square functions}

For $f:\Omega_n\to X$ define the first order square gradient by
\[
 |\nabla f|_X(\eps)=
 \left(\sum_{j=1}^n\|D_jf(\eps)\|_X^2\right)^{1/2}.
\]
For $k\geq1$, the ordered $k$-th square gradient is
\[
 G^{\mathrm{ord}}_{k,X}f(\eps)=
 \left(\sum_{i_1,\ldots,i_k=1}^n
 \|D_{i_1}\cdots D_{i_k}f(\eps)\|_X^2\right)^{1/2},
\]
and we set $G^{\mathrm{ord}}_{0,X}f(\eps)=\|f(\eps)\|_X$.  The distinct index square gradient is
\[
 G^{\mathrm{dis}}_{k,X}f(\eps)=
 \left(
 \sum_{\substack{A\subseteq\{1,\ldots,n\}\\ |A|=k}}
 \|D_Af(\eps)\|_X^2
 \right)^{1/2}.
\]
For $k=1$,
\[
 G^{\mathrm{ord}}_{1,X}f=G^{\mathrm{dis}}_{1,X}f=|\nabla f|_X.
\]
For every $k\geq1$, we use the pointwise comparison
\[
 G^{\mathrm{dis}}_{k,X}f\leq G^{\mathrm{ord}}_{k,X}f,
\]
since every distinct $k$-fold derivative appears among the ordered $k$-fold derivatives.

\subsection{Banach space conventions}

For $1\leq p<\infty$, the Bochner norm is
\[
 \|f\|_{L^p(\Omega_n;X)}:=
 \left(\E_\eps\|f(\eps)\|_X^p\right)^{1/p}.
\]
%When additional independent random signs occur, the expectation in the subscript indicates the underlying product probability space.  

We use $X^*$ for the dual Banach space and write $\langle x,x^*\rangle$ for the duality pairing.  The Walsh multipliers $D_j$, $\Delta^{-1}$, and $\Delta_a^{-1}$ are self-adjoint with respect to the scalar Walsh basis, hence, for appropriate $X$- and $X^*$-valued functions,
\[
 \E_\eps\langle Tf(\eps),h(\eps)\rangle
 =\E_\eps\langle f(\eps),Th(\eps)\rangle
\]
whenever $T$ is one of these real diagonal Walsh multipliers.

A Banach space $X$ has Rademacher type $2$ if there is a constant $T<\infty$ such that for every finite sequence $x_1,\ldots,x_m\in X$,
\[
 \left(\E_\eta\left\|\sum_{j=1}^m\eta_jx_j\right\|_X^2\right)^{1/2}
 \leq T\left(\sum_{j=1}^m\|x_j\|_X^2\right)^{1/2},
\]
where $\eta_1,\ldots,\eta_m$ are independent symmetric signs.  The least such $T$ is denoted by $T_2(X)$.

We shall use the Kahane--Khintchine inequality in the following standard form \cite{Kahane1985,LedouxTalagrand1991}: for every $0<p<\infty$ there is a constant $K_{2,p}<\infty$ such that, for every Banach space $X$ and every finite sequence $x_1,\ldots,x_m\in X$,
\[
 \left(\E_\eta\left\|\sum_{j=1}^m\eta_jx_j\right\|_X^2\right)^{1/2}
 \leq K_{2,p}
 \left(\E_\eta\left\|\sum_{j=1}^m\eta_jx_j\right\|_X^p\right)^{1/p}.
\]

Finally, for a finite index set $I$ we write $\ell_2^I(X)$ for the Hilbertian direct sum of $|I|$ copies of $X$, with norm
\[
 \|(x_i)_{i\in I}\|_{\ell_2^I(X)}=
 \left(\sum_{i\in I}\|x_i\|_X^2\right)^{1/2}.
\]
When $I=\{1,\ldots,M\}$ we also write $\ell_2^M(X)$.

\section{Randomized second order Riesz projections}
\label{sec:second-order-riesz}

Our first main result shows that a $\sqrt n$ bound of randomized second order Riesz projections for arbitrary Banach space. The proof is based on the observation that the second order Riesz projections admit a heat representation in which all Banach space dependence disappears.%; after this reduction, only scalar moments of a biased-sign sum remain. 
Throughout this section, $\delta=(\delta_1,\ldots,\delta_n)$ denotes an independent Rademacher vector, independent of all other variables.

\begin{theorem}[]
\label{thm:second-order-randomized}
Let $X$ be an arbitrary Banach space and let $1\leq p<\infty$.  Then for every $n\geq1$ and every $g:\Omega_n\to X$,
\begin{equation}
\label{eq:sqrt-n-dual}
 \left(\E_{\eps,\delta}
 \left\|
 \sum_{j=1}^n\delta_j\Delta^{-1}D_jg(\eps)
 \right\|_X^p\right)^{1/p}
 \leq C_p\sqrt n\,\|g\|_{L^p(\Omega_n;X)}.
\end{equation}
\end{theorem}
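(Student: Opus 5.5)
We use the semigroup representation $\Delta^{-1}D_j=\int_0^\infty P_tD_j\,dt$ together with Lemma~\ref{lem:biased-sign-derivative} in the isotropic case. The Banach space then enters only through the triangle inequality, and everything else reduces to scalar moment estimates. Concretely, for fixed $\eps$ and $\delta$, the Lemma gives
\[
 \sum_{j=1}^n\delta_j\Delta^{-1}D_jg(\eps)
 =\int_0^\infty \frac{e^{-t}}{\sqrt{1-e^{-2t}}}\,
 \E_\xi\Bigl[\Bigl(\sum_{j=1}^n\delta_jZ_j(t)\Bigr)g(\eps\xi(t))\Bigr]\,dt,
\]
where $Z_j(t)=(\xi_j(t)-e^{-t})/\sqrt{1-e^{-2t}}$ and $\xi(t)$ is independent of $\delta$ and $\eps$. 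The scalar weight $S_t:=\sum_j\delta_jZ_j(t)$ collects all the randomness coming from the $\delta_j$.

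Next we take the $L^p(\eps,\delta)$ norm. By Minkowski's integral inequality we move the norm inside both the $dt$ integral and the $\E_\xi$ expectation, which leaves the quantity
\[
\Bigl(\E_{\eps,\delta}\bigl|\E_\xi[S_t\, \|g(\eps\xi(t))\|_X]\bigr|^p\Bigr)^{1/p}
\]
to control. The cleanest route is Hölder in $\xi$ for fixed $\eps$ and $\delta$:
\[
 \bigl|\E_\xi[S_t\|g(\eps\xi)\|]\bigr|\le (\E_\xi|S_t|^{p'})^{1/p'}(\E_\xi\|g(\eps\xi)\|^p)^{1/p}.
\]
Then $\E_\eps\E_\xi\|g(\eps\xi)\|^p=\|g\|_p^p$ by the invariance of the uniform measure under coordinatewise multiplication. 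For $p=1$ we instead use $\sup|S_t|$, and for $p=1$ it is better to bound by $\E_\delta\E_\xi|S_t|\cdot$, which again works by Fubini, since the $\eps$-average decouples. For general $p$, rather than Hölder, we write $\E_{\eps,\delta}|\E_\xi[S_t h(\eps\xi)]|^p\le \E_{\delta}\E_\xi|S_t|^p$-type bounds via Jensen applied to the probability measure $\xi$. Jensen gives $|\E_\xi S_t h|^p\le \E_\xi|S_t|^p|h|^p$. After averaging over $\eps$, the variable $\eps\xi$ is uniform and independent of $\xi$, so we obtain $\E_{\delta,\xi}|S_t|^p\cdot\|g\|_p^p$. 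This is the step I would commit to, since it handles all $1\le p<\infty$ uniformly.

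It then remains to bound $(\E_{\delta,\xi}|S_t|^p)^{1/p}$. Conditionally on $\xi$, $S_t$ is a Rademacher sum in $\delta$, so Khintchine gives $(\E|S_t|^p)^{1/p}\le C_p(\E_\xi(\sum_j Z_j^2)^{p/2})^{1/p}$. The main obstacle is that $Z_j(t)$ is unbounded as $t\to0$: $|Z_j|\approx (2t)^{-1/2}$ with probability about $t$. Hence $\E(\sum Z_j^2)^{p/2}$ is of order $n^{p/2}$ only when $nt\gtrsim1$; for small $t$ it behaves like $nt\cdot t^{-p/2}$. Combined with the prefactor $e^{-t}/\sqrt{1-e^{-2t}}\approx (2t)^{-1/2}$, the integral near $0$ is $\int_0 t^{-1/2}\max(\sqrt n,(nt)^{1/p}t^{-1/2})\,dt$, which for $p>2$ diverges at $0$.

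To fix this, for small $t$ (below $1/n$, say) I would avoid the Jensen loss. One option is to keep $P_tD_j$ with the crude bound $\|P_tD_jg\|_p\le\|g\|_p$, which only loses a factor $\int_0^{1/n} n\,dt=1$ after the triangle inequality over $j$. Similarly, the range $t\ge 1/n$ should give $\int_{1/n}^\infty t^{-1/2}\bigl(\sqrt n+(nt)^{1/p}t^{-1/2}\bigr)e^{-t}dt\lesssim_p\sqrt n$. I would check this scalar moment computation carefully (e.g. via Rosenthal's inequality for $\sum Z_j^2$), since it is where the $\sqrt n$ and the $p$-dependence of $C_p$ are decided.
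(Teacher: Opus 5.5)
Your route is the paper's: the heat representation of $\Delta^{-1}D_j$ via Lemma~\ref{lem:biased-sign-derivative}, Minkowski and Jensen in $\xi$, invariance of the uniform measure under $\eps\mapsto\eps\xi(t)$, and then a scalar bound on $\int_0^\infty b(t)M_p(t,n)\,dt$. Here $b(t)=e^{-t}/\sqrt{1-e^{-2t}}$ and $M_p(t,n)=(\E_{\delta,\xi}|\sum_j\delta_jZ_j(t)|^p)^{1/p}$.

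However, the ``main obstacle'' you identify is an arithmetic slip. Your own small-$t$ heuristic gives the integrand $t^{-1/2}\cdot(nt)^{1/p}t^{-1/2}=n^{1/p}t^{1/p-1}$. This is integrable at $0$ for every finite $p$, so nothing diverges and the split at $t=1/n$ is not needed. Your fix is harmless: the bound $\sum_j\int_0^{1/n}\|P_tD_jg\|_{L^p}\,dt\le\|g\|_{L^p}$ is correct, and the Rosenthal estimate on $[1/n,\infty)$ does give $\lesssim_p\sqrt n$. So your argument closes, but it solves a nonexistent problem.

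The paper's scalar step is also simpler than Rosenthal. For $p\le2$, $M_p\le M_2=\sqrt n$ by orthogonality, and $\int_0^\infty b=\pi/2$. For $p>2$, Khintchine conditionally on $\xi$, followed by just the triangle inequality in $L^{p/2}(\xi)$, gives $M_p(t,n)\le K_p\sqrt n\,\|Z_1(t)\|_{L^p}$. On $(0,1]$ one has $\|Z_1(t)\|_{L^p}^p\lesssim_p t^{1-p/2}$, because the large value, of size about $t^{-1/2}$, has probability about $t/2$. Hence $b(t)M_p(t,n)\lesssim_p\sqrt n\,t^{1/p-1}$ there, and the integrand decays exponentially on $[1,\infty)$. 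This crude bound replaces your better small-time factor $n^{1/p}$ by $\sqrt n$, which costs nothing since the target is $\sqrt n$ anyway.

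Finally, the H\"older and $\sup|S_t|$ detours for $p=1$ in your second paragraph are unnecessary. The Jensen step already covers all $1\le p<\infty$ uniformly, exactly as in the paper.
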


\begin{remark}[]
\label{rem:iv-relationship}
%In the notation of Ivanisvili--Volberg, inequality \eqref{eq:sqrt-n-dual} is their dual estimate (5.13) for the second-order Riesz projections $\Delta^{-1}D_j$; by duality, \eqref{eq:sqrt-n-primal} is their estimate (5.12).  

The Theorem 5.10 of \cite{IvanisviliVolberg2022} proves the bound $C(p,n)\lesssim_p n\log n$, and their Remark 5.11 suggests that the correct order should be $\sqrt n$ \cite{IvanisviliVolberg2022}.  Theorem~\ref{thm:second-order-randomized} verifies this fact for all Banach spaces.  The order $\sqrt n$ is also the right general dimension dependence: Ivanisvili--Volberg record, using the Hyt\"onen--Naor example, that in the finite cotype class the constant can grow at least as $\sqrt n$ \cite{HytonenNaor2013,IvanisviliVolberg2022}.  %Thus the theorem improves the general Banach-space upper bound while matching the known worst-order behavior.  

%It is complementary to the dimension-free estimates under nontrivial type assumptions in the same paper.
\end{remark}

\begin{proof}
For $t>0$, let $\xi(t)=(\xi_1(t),\ldots,\xi_n(t))$ be independent of $\eps$ and satisfy
\[
 \mathbb P\{\xi_i(t)=1\}=\frac{1+e^{-t}}2,
 \qquad
 \mathbb P\{\xi_i(t)=-1\}=\frac{1-e^{-t}}2.
\]
Put $Z_i(t)=\frac{\xi_i(t)-e^{-t}}{\sqrt{1-e^{-2t}}}.$ Then $\mathbb E Z_i(t)=0$ and $\mathbb E Z_i(t)^2=1$.  In the isotropic case $a_1=\cdots=a_n=1$, Lemma~\ref{lem:biased-sign-derivative} gives
\begin{equation}
\label{eq:isotropic-heat-derivative}
 P_tD_jg(\eps)
 =\frac{e^{-t}}{\sqrt{1-e^{-2t}}}
  \E_\xi\bigl[Z_j(t)g(\eps\xi(t))\bigr].
\end{equation}
Equivalently, \eqref{eq:isotropic-heat-derivative} can be checked directly on Walsh characters: if $g(\eps)=W_A(\eps)x$, then both sides vanish when $j\notin A$, while for $j\in A$,
\[
 \E_\xi[Z_j(t)W_A(\eps\xi(t))x]
 =W_A(\eps)x\,\E[Z_j(t)\xi_j(t)]
  \prod_{i\in A\setminus\{j\}}\E\xi_i(t)
 =W_A(\eps)x\sqrt{1-e^{-2t}}\,e^{-t(|A|-1)}.
\]
Multiplication by $e^{-t}/\sqrt{1-e^{-2t}}$ gives
$e^{-t|A|}W_A(\eps)x=P_tD_j(W_Ax)(\eps)$.

Since $\Delta^{-1}D_j=\int_0^\infty P_tD_j\,dt$ on every Walsh character, \eqref{eq:isotropic-heat-derivative} gives
\begin{equation}
\label{eq:second-order-heat-expansion}
 \sum_{j=1}^n\delta_j\Delta^{-1}D_jg(\eps)
 =\int_0^\infty b(t)\,
  \E_\xi\left[
  \left(\sum_{j=1}^n\delta_jZ_j(t)\right)g(\eps\xi(t))
  \right]dt,
\end{equation}
where $b(t)=\frac{e^{-t}}{\sqrt{1-e^{-2t}}}.$ For complete formal rigor, one may first integrate over $[a,R]$ with $0<a<R<\infty$, use the estimates below, and then let $a\downarrow0$ and $R\uparrow\infty$.

Define
\[
 M_p(t,n)=
 \left(\E_{\delta,\xi}\left|\sum_{j=1}^n\delta_jZ_j(t)\right|^p\right)^{1/p}.
\]
We claim that
\begin{equation}
\label{eq:Mp-integral-bound}
 \int_0^\infty b(t)M_p(t,n)\,dt\leq C_p\sqrt n.
\end{equation}
If $1\leq p\leq2$, then $M_p(t,n)\leq M_2(t,n)=\sqrt n$, because the random variables $\delta_jZ_j(t)$ are pairwise orthogonal in $L^2$.  Also
\[
 \int_0^\infty b(t)\,dt
 =\int_0^1\frac{du}{\sqrt{1-u^2}}
 =\frac\pi2,
\]
where $u=e^{-t}$.  Hence \eqref{eq:Mp-integral-bound} follows for $1\leq p\leq2$.

Assume now that $p>2$.  By the scalar Khintchine inequality, conditionally on $\xi(t)$,
\[
 \left(\E_\delta\left|\sum_{j=1}^n\delta_jZ_j(t)\right|^p\right)^{1/p}
 \leq K_p\left(\sum_{j=1}^n|Z_j(t)|^2\right)^{1/2}.
\]
Taking $L^p$ in $\xi$ and using Minkowski in $L^{p/2}$ gives
\[
 M_p(t,n)
 \leq K_p\left\|\sum_{j=1}^n|Z_j(t)|^2\right\|_{L^{p/2}(\xi)}^{1/2}
 \leq K_p\left(\sum_{j=1}^n\||Z_j(t)|^2\|_{L^{p/2}(\xi)}\right)^{1/2}
 =K_p\sqrt n\,\|Z_1(t)\|_{L^p}.
\]
It remains to check that
\[
 \int_0^\infty b(t)\|Z_1(t)\|_{L^p}\,dt<\infty.
\]
When $0<t\leq1$, the two possible values of $Z_1(t)$ satisfy
\[
 \left|\frac{1-e^{-t}}{\sqrt{1-e^{-2t}}}\right|\leq C\sqrt t,
 \qquad
 \left|\frac{-1-e^{-t}}{\sqrt{1-e^{-2t}}}\right|\leq Ct^{-1/2},
\]
and their probabilities are respectively $(1+e^{-t})/2$ and $(1-e^{-t})/2\leq Ct$.  Therefore
\[
 \|Z_1(t)\|_{L^p}^p\leq C_p(t^{p/2}+t^{1-p/2})\leq C_pt^{1-p/2}.
\]
Since $b(t)\leq Ct^{-1/2}$ on $(0,1]$, the integrand is bounded there by $C_pt^{1/p-1}$, which is integrable.  On $[1,\infty)$, $b(t)\leq Ce^{-t}$ and $\|Z_1(t)\|_{L^p}\leq C_p$.  This proves \eqref{eq:Mp-integral-bound} for every $1\leq p<\infty$.

We now estimate the operator.  From \eqref{eq:second-order-heat-expansion}, Minkowski's integral inequality and Jensen's inequality for the conditional expectation in $\xi$ yield
\[
\begin{aligned}
 &\left(\E_{\eps,\delta}
 \left\|
 \sum_{j=1}^n\delta_j\Delta^{-1}D_jg(\eps)
 \right\|_X^p\right)^{1/p}                                      \\
 &\quad\leq
 \int_0^\infty b(t)
 \left(
 \E_{\eps,\delta,\xi}
 \left|\sum_{j=1}^n\delta_jZ_j(t)\right|^p
 \|g(\eps\xi(t))\|_X^p
 \right)^{1/p}dt.
\end{aligned}
\]
For each fixed $\xi(t)$, the map $\eps\mapsto\eps\xi(t)$ preserves the uniform measure on $\Omega_n$.  Thus
\[
 \E_\eps\|g(\eps\xi(t))\|_X^p
 =\E_\eps\|g(\eps)\|_X^p,
\]
and the last display is at most
\[
 \|g\|_{L^p(\Omega_n;X)}
 \int_0^\infty b(t)M_p(t,n)\,dt
 \leq C_p\sqrt n\,\|g\|_{L^p(\Omega_n;X)}.
\]
This proves \eqref{eq:sqrt-n-dual}.
\end{proof}

\eqref{eq:sqrt-n-dual} is the dual form of the second order projection inequality in Ivanisvili--Volberg \cite{IvanisviliVolberg2022}. It's easy to show the following corollary.  %We record the corresponding primal formulation separately, since the statement is often the more convenient one when the input is a vector field.

\begin{corollary}[]
\label{cor:second-order-primal}
Let $1<p<\infty$, let $X$ be a Banach space, and let $F:\Omega_n\times\Omega_n\to X$.  Set $F_j(\eps)=\E_\delta[\delta_jF(\eps,\delta)],
 \qquad 1\leq j\leq n.$ Then
\begin{equation}
\label{eq:sqrt-n-primal}
 \left(\E_\eps
 \left\|
 \sum_{j=1}^n\Delta^{-1}D_jF_j(\eps)
 \right\|_X^p\right)^{1/p}
 \leq C_{p'}\sqrt n\,
 \left(\E_{\eps,\delta}\|F(\eps,\delta)\|_X^p\right)^{1/p},
\end{equation}
where $p'=p/(p-1)$.
\end{corollary}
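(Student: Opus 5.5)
We will prove the corollary by duality from Theorem~\ref{thm:second-order-randomized} applied to $X^*$ with exponent $p'$. Put $S(\eps)=\sum_{j=1}^n\Delta^{-1}D_jF_j(\eps)$; this is an $X$-valued function on the finite cube $\Omega_n$. Since $\Omega_n$ is finite, $L^p(\Omega_n;X)^*=L^{p'}(\Omega_n;X^*)$ isometrically. Hence it suffices to bound $|\E_\eps\langle S(\eps),h(\eps)\rangle|$ uniformly over $h:\Omega_n\to X^*$ with $\|h\|_{L^{p'}(\Omega_n;X^*)}\le1$.

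Fix such an $h$. The operators $\Delta^{-1}$ and $D_j$ are real diagonal Walsh multipliers, so the self-adjointness identity of Section~\ref{sec:preliminaries} lets us move them onto $h$:
\[
 \E_\eps\langle S(\eps),h(\eps)\rangle=\sum_{j=1}^n\E_\eps\langle F_j(\eps),\Delta^{-1}D_jh(\eps)\rangle .
\]
Next we substitute the definition $F_j(\eps)=\E_\delta[\delta_jF(\eps,\delta)]$ and use linearity in $j$:
\[
 \E_\eps\langle S(\eps),h(\eps)\rangle=\E_{\eps,\delta}\Bigl\langle F(\eps,\delta),\sum_{j=1}^n\delta_j\Delta^{-1}D_jh(\eps)\Bigr\rangle .
\]

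Now apply H\"older's inequality on the product space $\Omega_n\times\Omega_n$ with exponents $p$ and $p'$. This gives
\[
 \bigl|\E_\eps\langle S(\eps),h(\eps)\rangle\bigr|\le\bigl(\E_{\eps,\delta}\|F(\eps,\delta)\|_X^p\bigr)^{1/p}\Bigl(\E_{\eps,\delta}\Bigl\|\sum_{j=1}^n\delta_j\Delta^{-1}D_jh(\eps)\Bigr\|_{X^*}^{p'}\Bigr)^{1/p'} .
\]
The second factor is exactly the left side of \eqref{eq:sqrt-n-dual}, with $X$ replaced by the Banach space $X^*$ and $p$ replaced by $p'$. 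Since $1<p'<\infty$, Theorem~\ref{thm:second-order-randomized} applies and bounds this factor by $C_{p'}\sqrt n\,\|h\|_{L^{p'}(\Omega_n;X^*)}\le C_{p'}\sqrt n$.

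Taking the supremum over $h$ gives \eqref{eq:sqrt-n-primal}. No step here is a real obstacle. The only points that need care are the norming duality for Bochner $L^p$ on a finite space, which is elementary, and the fact that Theorem~\ref{thm:second-order-randomized} holds for an arbitrary Banach space, so no reflexivity is needed when we pass to $X^*$.
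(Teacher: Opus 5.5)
Your proposal is correct and follows the paper's proof essentially step for step. You move $\Delta^{-1}D_j$ onto a test function $h\in L^{p'}(\Omega_n;X^*)$ by self-adjointness, insert $F_j=\E_\delta[\delta_jF]$, apply H\"older on $\Omega_n\times\Omega_n$, and invoke Theorem~\ref{thm:second-order-randomized} for $X^*$ with exponent $p'$; your explicit remark that $L^{p'}(\Omega_n;X^*)$ norms $L^p(\Omega_n;X)$ on the finite cube fills in the one step the paper leaves implicit when it takes the supremum over $h$.
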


\begin{proof}
Let $h\in L^{p'}(\Omega_n;X^*)$.  Since $D_j$ and $\Delta^{-1}$ are self-adjoint Walsh multipliers,
\[
\begin{aligned}
 \E_\eps\left\langle\sum_{j=1}^n\Delta^{-1}D_jF_j(\eps),h(\eps)\right\rangle
 &=\E_\eps\sum_{j=1}^n\left\langle F_j(\eps),\Delta^{-1}D_jh(\eps)\right\rangle       \\
 &=\E_{\eps,\delta}
 \left\langle F(\eps,\delta),
  \sum_{j=1}^n\delta_j\Delta^{-1}D_jh(\eps)
 \right\rangle .
\end{aligned}
\]
By Holder's inequality and Theorem~\ref{thm:second-order-randomized} applied to the Banach space $X^*$ and the exponent $p'$,
\[
\begin{aligned}
 \left|
 \E_\eps\left\langle\sum_{j=1}^n\Delta^{-1}D_jF_j,h\right\rangle
 \right|
 &\leq
 \|F\|_{L^p(\Omega_n\times\Omega_n;X)}
 \left\|
  \sum_{j=1}^n\delta_j\Delta^{-1}D_jh
 \right\|_{L^{p'}(\Omega_n\times\Omega_n;X^*)}       \\
 &\leq C_{p'}\sqrt n\,
 \|F\|_{L^p(\Omega_n\times\Omega_n;X)}
 \|h\|_{L^{p'}(\Omega_n;X^*)}.
\end{aligned}
\]
Taking the supremum over all such $h$ with $\|h\|_{L^{p'}(\Omega_n;X^*)}\leq1$ proves \eqref{eq:sqrt-n-primal}.
\end{proof}

\subsection*{Weighted anisotropic Riesz estimates}
\label{sec:weighted-anisotropic}

The proof of Theorem~\ref{thm:second-order-randomized} is insensitive to isotropy.  The same argument applies to weighted number operators, but the resulting constant is no longer governed only by the number of coordinates.  The inverse $\Delta_a^{-1}$ sees the full weighted frequency $a(A)=\sum_{j\in A}a_j$.

\begin{definition}
\label{def:lambda-pa}
Let $a=(a_1,\ldots,a_n)\in(0,\infty)^n$ and $1\leq p<\infty$.  With the notation of Lemma~\ref{lem:biased-sign-derivative}, set
\[
 \Lambda_p(a)=
 \int_0^\infty
 \left(
 \E_{\delta,\xi}
 \left|
 \sum_{j=1}^n
 \delta_j\frac{a_je^{-a_jt}}{\sqrt{1-e^{-2a_jt}}}
 Z_j^a(t)
 \right|^p
 \right)^{1/p}dt.
\]
\end{definition}

\begin{theorem}
\label{thm:weighted-riesz}
Let $a_1,\ldots,a_n>0$ and let $\Delta_a=\sum_{j=1}^n a_jD_j$.  Then $\Lambda_p(a)<\infty$ for every $1\leq p<\infty$, and for every Banach space $X$ and every $g:\Omega_n\to X$,
\begin{equation}
\label{eq:weighted-riesz}
 \left(
 \E_{\eps,\delta}
 \left\|
 \sum_{j=1}^n\delta_j a_j\Delta_a^{-1}D_jg(\eps)
 \right\|_X^p
 \right)^{1/p}
 \leq
 \Lambda_p(a)\,\|g\|_{L^p(\Omega_n;X)}.
\end{equation}
Moreover, if $1\leq p\leq2$, then
\begin{equation}
\label{eq:lambda2-bound}
 \Lambda_p(a)\leq\Lambda_2(a)
 =\int_0^\infty
 \left(\sum_{j=1}^n\frac{a_j^2}{e^{2a_jt}-1}\right)^{1/2}dt.
\end{equation}
In particular, if $a_1=\cdots=a_n$, then $\Lambda_2(a)=(\pi/2)\sqrt n$.
\end{theorem}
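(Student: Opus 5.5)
The plan is to repeat the proof of Theorem~\ref{thm:second-order-randomized} verbatim, now with the weighted semigroup. Set $b_j(t)=a_je^{-a_jt}/\sqrt{1-e^{-2a_jt}}$. Starting from $a_j\Delta_a^{-1}D_j=\int_0^\infty a_jP_t^aD_j\,dt$ and applying Lemma~\ref{lem:biased-sign-derivative}, we get
\[
 \sum_{j=1}^n\delta_ja_j\Delta_a^{-1}D_jg(\eps)=\int_0^\infty\E_\xi\Bigl[\Bigl(\sum_{j=1}^n\delta_jb_j(t)Z_j^a(t)\Bigr)g(\eps\xi^a(t))\Bigr]dt.
\]
As before, one first integrates over $[a',R]$ and then passes to the limit. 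Next apply Minkowski's integral inequality and Jensen in $\xi$. For each fixed $\xi$, the map $\eps\mapsto\eps\xi^a(t)$ preserves the uniform measure on $\Omega_n$, so $\E_\eps\|g(\eps\xi^a(t))\|_X^p=\|g\|_{L^p}^p$. The integrand in $t$ therefore factorizes exactly into the $p$-th moment appearing in Definition~\ref{def:lambda-pa} times $\|g\|_{L^p(\Omega_n;X)}$. This yields \eqref{eq:weighted-riesz}, provided $\Lambda_p(a)<\infty$.

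For \eqref{eq:lambda2-bound} and the case $1\le p\le2$, use monotonicity of $L^p$ norms on a probability space. The variables $\delta_jb_jZ_j^a$ are orthogonal with $\E|Z^a_j|^2=1$, so the $L^2$ norm equals $(\sum_jb_j(t)^2)^{1/2}$. Finally, the identity $b_j^2=a_j^2e^{-2a_jt}/(1-e^{-2a_jt})=a_j^2/(e^{2a_jt}-1)$ gives the stated formula for $\Lambda_2(a)$. In the isotropic case, substituting $u=e^{-at}$ in $\int_0^\infty \sqrt n\,a e^{-at}(1-e^{-2at})^{-1/2}dt$ gives $\sqrt n\int_0^1(1-u^2)^{-1/2}du=(\pi/2)\sqrt n$, independently of the common value $a$.

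The one step with real content is finiteness of $\Lambda_p(a)$ for $p>2$, and I would handle it exactly as in the isotropic proof. Conditionally on $\xi$, Khintchine's inequality followed by Minkowski in $L^{p/2}$ bounds the $p$-moment by $K_p(\sum_j b_j(t)^2\|Z_j^a(t)\|_{L^p}^2)^{1/2}$. This is at most $K_p\sum_j b_j(t)\|Z_j^a(t)\|_{L^p}$, and since $n$ is fixed it suffices to check each term separately. The scaling $Z_j^a(t)\overset{d}{=}Z_1(a_jt)$, where $Z_1$ is the isotropic variable, turns the $j$-th term into $a_jb(a_jt)\|Z_1(a_jt)\|_{L^p}$. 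The change of variables $s=a_jt$ then gives exactly the isotropic integral $\int_0^\infty b(s)\|Z_1(s)\|_{L^p}ds$, already shown finite in the proof of Theorem~\ref{thm:second-order-randomized}. The small-$t$ behaviour is $s^{1/p-1}$, and the large-$t$ behaviour is exponential decay.
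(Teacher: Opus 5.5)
Your proposal is correct and follows the paper's proof. It uses the same heat expansion via Lemma~\ref{lem:biased-sign-derivative}, the same Minkowski, Jensen and measure-invariance step, and the same orthogonality computation for $\Lambda_2(a)$. Finiteness is obtained the same way, by reducing each coordinate through $s=a_jt$ to the isotropic integral $\int_0^\infty b(s)\|Z_1(s)\|_{L^p}\,ds$.

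The only difference is cosmetic. The paper bounds $(\E|S_a|^p)^{1/p}\le\sum_j b_j(t)\|Z_j^a(t)\|_{L^p}$ directly by the triangle inequality in $L^p$, valid for every $p$. Your detour through Khintchine and Minkowski in $L^{p/2}$ is therefore unnecessary, since you pass to the $\ell_1$ sum immediately afterwards anyway.
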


Note that for $a_1=\cdots=a_n$, Theorem~\ref{thm:weighted-riesz} reduces, in the range $1\leq p\leq2$, to the $\sqrt n$ estimate \eqref{eq:lambda2-bound} with the explicit constant $\pi/2$ at $p=2$; Theorem~\ref{thm:second-order-randomized} gives the same dimension order for all finite $p$.  

\begin{remark}[]
\label{rem:weighted-relationship}

For general weights, the constant is not determined only by the number of active coordinates.  The functional
\[
 \int_0^\infty
 \left(\sum_{j=1}^n\frac{a_j^2}{e^{2a_jt}-1}\right)^{1/2}dt
\]
captures the distribution of the scales $a_j^{-1}$.  Thus the Theorem \ref{thm:weighted-riesz} is a anisotropic extension of the second-order Riesz estimate, rather than the coordinate subset refinement appearing in the appendix of Ben-Efraim--Lust-Piquard \cite{BenEfraimLustPiquard2008}.  Their appendix corresponds to switching coordinates on or off for the unweighted number operator, whereas here the inverse $\Delta_a^{-1}$ couples all active coordinates through the weighted frequency $\sum_{j\in A}a_j$.
\end{remark}

\begin{proof}
Let $P_t^a=e^{-t\Delta_a}$.  By Lemma~\ref{lem:biased-sign-derivative}, for $t>0$ and $1\leq j\leq n$,
\begin{equation}
\label{eq:weighted-heat-derivative-again}
 P_t^aD_jg(\eps)
 =\frac{e^{-a_jt}}{\sqrt{1-e^{-2a_jt}}}
 \E_\xi\bigl[Z_j^a(t)g(\eps\xi^a(t))\bigr].
\end{equation}
Equivalently, testing on $g(\eps)=W_A(\eps)x$ gives the same identity directly.  If $j\notin A$, both sides vanish.  If $j\in A$, then
\[
 \E_\xi[Z_j^a(t)W_A(\eps\xi^a(t))x]
 =W_A(\eps)x\sqrt{1-e^{-2a_jt}}
  \prod_{i\in A\setminus\{j\}}e^{-a_it}.
\]
Multiplication by $e^{-a_jt}/\sqrt{1-e^{-2a_jt}}$ gives
$e^{-t\sum_{i\in A}a_i}W_A(\eps)x=P_t^aD_j(W_Ax)(\eps)$.

Since
\[
 a_j\Delta_a^{-1}D_j=
 \int_0^\infty a_jP_t^aD_j\,dt,
\]
we obtain
\begin{equation}
\label{eq:weighted-heat-expansion}
 \sum_{j=1}^n\delta_j a_j\Delta_a^{-1}D_jg(\eps)
 =\int_0^\infty
 \E_\xi\bigl[S_a(t,\delta,\xi)g(\eps\xi^a(t))\bigr]dt,
\end{equation}
where
\[
 S_a(t,\delta,\xi):=
 \sum_{j=1}^n
 \delta_j\frac{a_je^{-a_jt}}{\sqrt{1-e^{-2a_jt}}}
 Z_j^a(t).
\]
Minkowski's integral inequality, Jensen's inequality in $\xi$, and invariance of the uniform measure under $\eps\mapsto\eps\xi^a(t)$ give
\[
\begin{aligned}
 &\left(
 \E_{\eps,\delta}
 \left\|
 \sum_{j=1}^n\delta_j a_j\Delta_a^{-1}D_jg(\eps)
 \right\|_X^p
 \right)^{1/p}                                                    \\
 &\quad\leq
 \int_0^\infty
 \left(
 \E_{\eps,\delta,\xi}
 |S_a(t,\delta,\xi)|^p\|g(\eps\xi^a(t))\|_X^p
 \right)^{1/p}dt                                                   \\
 &\quad=
 \|g\|_{L^p(\Omega_n;X)}
 \int_0^\infty
 \left(\E_{\delta,\xi}|S_a(t,\delta,\xi)|^p\right)^{1/p}dt.
\end{aligned}
\]
The last integral is $\Lambda_p(a)$, proving \eqref{eq:weighted-riesz} once finiteness is known.

To prove finiteness, use the triangle inequality in $L^p$:
\[
 \left(\E|S_a(t,\delta,\xi)|^p\right)^{1/p}
 \leq
 \sum_{j=1}^n
 \frac{a_je^{-a_jt}}{\sqrt{1-e^{-2a_jt}}}
 \|Z_j^a(t)\|_{L^p}.
\]
After the change of variables $s=a_jt$, the $j$-th integral is
\[
 \int_0^\infty
 \frac{e^{-s}}{\sqrt{1-e^{-2s}}}\,
 \|Z_j(s)\|_{L^p}\,ds,
\]
where $Z_j(s)=(\xi_j(s)-e^{-s})/\sqrt{1-e^{-2s}}$ with
$\mathbb P\{\xi_j(s)=1\}=(1+e^{-s})/2$.  This integral is finite by the same estimate used in the proof of Theorem~\ref{thm:second-order-randomized}.  Hence $\Lambda_p(a)<\infty$.

If $1\leq p\leq2$, then $L^p\leq L^2$ gives $\Lambda_p(a)\leq\Lambda_2(a)$.  Finally, by independence and orthogonality in the auxiliary Rademacher signs $\delta_j$,
\[
 \E_{\delta,\xi}|S_a(t,\delta,\xi)|^2
 =\sum_{j=1}^n
 \frac{a_j^2e^{-2a_jt}}{1-e^{-2a_jt}}
 \E|Z_j^a(t)|^2
 =\sum_{j=1}^n\frac{a_j^2}{e^{2a_jt}-1},
\]
which proves \eqref{eq:lambda2-bound}.  If all $a_j$ are identical, say $a_j=a$, then
\[
 \Lambda_2(a)
 =\sqrt n\int_0^\infty\frac{ae^{-at}}{\sqrt{1-e^{-2at}}}\,dt
 =\sqrt n\int_0^\infty\frac{e^{-s}}{\sqrt{1-e^{-2s}}}\,ds
 =\frac\pi2\sqrt n.
\]
\end{proof}

\section{Type $2$ and fixed chaos square gradient lower Riesz estimates}
\label{sec:type2-fixed-chaos}

In this section we study the lower Riesz estimate with the pointwise square gradient
\begin{equation}
\label{eq:square-gradient-lower-riesz-global}
   \|\Delta^{1/2}f\|_{L^p(\Omega_n;X)}
   \le C\,\||\nabla f|_X\|_{L^p(\Omega_n)}.
\end{equation}
%The results below concern this square-gradient estimate, not the randomized-gradient lower Riesz problem of Ivanisvili--Volberg \cite{IvanisviliVolberg2022}.  

We can show that on each fixed homogeneous Walsh chaos, a Banach space condition for \eqref{eq:square-gradient-lower-riesz-global} is Rademacher type $2$.

For $1<p<\infty$ and $k\ge1$, define
\[
   \mathfrak L_{p,k}^{\rm hom}(X)
   :=
   \sup_{n\ge k}
   \sup_{0\ne f\in H_k(\Omega_n;X)}
   \frac{\|\Delta^{1/2}f\|_{L^p(\Omega_n;X)}}
        {\||\nabla f|_X\|_{L^p(\Omega_n)}}.
\]
Since $f\in H_k(\Omega_n;X)$ implies $\Delta^{1/2}f=\sqrt{k}\,f$, equivalently
\[
   \mathfrak L_{p,k}^{\rm hom}(X)
   =
   \sqrt{k}
   \sup_{n\ge k}
   \sup_{0\ne f\in H_k(\Omega_n;X)}
   \frac{\|f\|_{L^p(\Omega_n;X)}}
        {\||\nabla f|_X\|_{L^p(\Omega_n)}}.
\]

We shall use the following standard Banach-valued decoupling theorem for Rademacher chaoses.  It follows from the decoupling inequalities of de la Pe\~na and Montgomery-Smith for multivariate $U$-statistics \cite{deLaPenaMontgomerySmith1995}. % The two-sided form below is the only decoupling input needed in the proof.

\begin{lemma}
\label{lem:decoupling-chaos}
Let $r\ge1$ and $0<p<\infty$.  There is a constant $D_{p,r}<\infty$ such that, for every Banach space $E$ and every finite symmetric diagonal-free array $(a_{i_1,\ldots,i_r})\subset E$,
\begin{align*}
&\left\|
   \sum_{i_1,\ldots,i_r}^{\ne}
   a_{i_1,\ldots,i_r}\varepsilon_{i_1}\cdots\varepsilon_{i_r}
 \right\|_{L^p(E)}  \le
D_{p,r}
\left\|
   \sum_{i_1,\ldots,i_r}^{\ne}
   a_{i_1,\ldots,i_r}
   \varepsilon_{i_1}^{(1)}\cdots\varepsilon_{i_r}^{(r)}
\right\|_{L^p(E)},
\end{align*}
and conversely
\begin{align*}
&\left\|
   \sum_{i_1,\ldots,i_r}^{\ne}
   a_{i_1,\ldots,i_r}
   \varepsilon_{i_1}^{(1)}\cdots\varepsilon_{i_r}^{(r)}
\right\|_{L^p(E)} \le
D_{p,r}
\left\|
   \sum_{i_1,\ldots,i_r}^{\ne}
   a_{i_1,\ldots,i_r}\varepsilon_{i_1}\cdots\varepsilon_{i_r}
 \right\|_{L^p(E)}.
\end{align*}
Here $\sum^{\ne}$ means that the indices in each summand are pairwise distinct, and $\varepsilon^{(1)},\ldots,\varepsilon^{(r)}$ are independent Rademacher sequences.
\end{lemma}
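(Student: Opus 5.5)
The plan is to deduce both inequalities of Lemma~\ref{lem:decoupling-chaos} from the tail (distributional) form of the de la Pe\~na--Montgomery-Smith decoupling theorem \cite{deLaPenaMontgomerySmith1995}, and then integrate the tails. That theorem states the following. Let $X_1,\dots,X_N$ be independent random variables and let $(X_i^{(k)})_{i\le N}$, $k=1,\dots,r$, be independent copies of the sequence. Let $h_{i_1,\dots,i_r}$ be measurable functions with values in a separable Banach space $E$. Then, for all $t>0$,
\[
 \mathbb P\Bigl\{\Bigl\|\sum^{\ne}h_{i_1\cdots i_r}(X_{i_1},\dots,X_{i_r})\Bigr\|>t\Bigr\}
 \le C_r\,\mathbb P\Bigl\{C_r\Bigl\|\sum^{\ne}h_{i_1\cdots i_r}(X^{(1)}_{i_1},\dots,X^{(r)}_{i_r})\Bigr\|>t\Bigr\},
\]
with $C_r$ depending only on $r$. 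The reverse inequality also holds, with the coupled and decoupled sums interchanged, provided the kernels are symmetric: $h_{i_1\cdots i_r}(x_1,\dots,x_r)=h_{i_{\sigma(1)}\cdots i_{\sigma(r)}}(x_{\sigma(1)},\dots,x_{\sigma(r)})$ for every permutation $\sigma$.

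\emph{Step 1: specialization.} I take $X_i=\varepsilon_i$ to be Rademacher variables and set $h_{i_1\cdots i_r}(x_1,\dots,x_r)=a_{i_1,\dots,i_r}x_1\cdots x_r$. Only finitely many vectors $a_{i_1,\dots,i_r}$ occur, so I may replace $E$ by their finite-dimensional span. This disposes of separability and measurability issues, and it does not change either norm. The sums in the theorem are over pairwise distinct indices, which matches the diagonal-free convention $\sum^{\ne}$. Symmetry of the array gives symmetry of the kernels, because $x_1\cdots x_r$ is invariant under permutations. Hence both directions of the tail inequality apply.

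\emph{Step 2: from tails to $L^p$ norms.} Write $S$ for the coupled chaos and $S'$ for the decoupled one. Using $\mathbb E\|S\|^p=\int_0^\infty pt^{p-1}\mathbb P\{\|S\|>t\}\,dt$, which is valid for every $0<p<\infty$, the tail inequality gives
\[
 \mathbb E\|S\|^p\le C_r\int_0^\infty pt^{p-1}\mathbb P\{C_r\|S'\|>t\}\,dt=C_r^{1+p}\,\mathbb E\|S'\|^p .
\]
Therefore $\|S\|_{L^p(E)}\le C_r^{1+1/p}\|S'\|_{L^p(E)}$. The converse tail inequality gives the reverse bound in exactly the same way. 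So the lemma holds with $D_{p,r}=C_r^{1+1/p}$, which is independent of $E$, of $N$ and of the array.

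\emph{Main obstacle.} The only delicate point is checking that the hypotheses of the cited theorem are met, and in particular the symmetry requirement for the recoupling direction (decoupled $\lesssim$ coupled), which is false without it. This is why the statement assumes a symmetric, diagonal-free array. If a self-contained route were preferred for this direction, I would first try the classical conditioning argument for $r=2$. One introduces independent selector signs and writes $\sum^{\ne}a_{ij}\varepsilon_i^{(1)}\varepsilon_j^{(2)}$ as a conditional expectation of a coupled chaos in the mixed sequence $(\varepsilon^{(1)},\varepsilon^{(2)})$. Using symmetry of $(a_{ij})$, one then applies Jensen's inequality, and induction on $r$ extends this to general $r$. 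However, citing \cite{deLaPenaMontgomerySmith1995} as the paper does is the intended route.
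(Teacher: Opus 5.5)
Your proposal is correct and takes the same route as the paper, which simply cites the de la Pe\~na--Montgomery-Smith tail decoupling theorem without further argument. You additionally spell out the specialization to the symmetric kernels $a_{i_1,\ldots,i_r}x_1\cdots x_r$, where symmetry is exactly what the recoupling direction requires, and the tail integration giving $D_{p,r}=C_r^{1+1/p}$ for all $0<p<\infty$; your optional Jensen-based alternative would only reach $p\ge1$ directly, but your main argument does not use it.
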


\begin{theorem}
\label{thm:fixed-chaos-type2}
Let $1<p<\infty$ and $k\ge1$.  There exist constants $0<c_p\le C_{p,k}<\infty$ such that, for every Banach space $X$,
\begin{equation}
\label{eq:fixed-chaos-type2-equivalence}
   c_pT_2(X)
   \le
   \mathfrak L_{p,k}^{\rm hom}(X)
   \le
   C_{p,k}T_2(X),
\end{equation}
with the convention that the right-hand side is $+\infty$ if $X$ does not have Rademacher type $2$.
Consequently, for each fixed $k$, the estimate
\[
   \|\Delta^{1/2}f\|_{L^p(\Omega_n;X)}
   \le C\,\||\nabla f|_X\|_{L^p(\Omega_n)},
   \qquad f\in H_k(\Omega_n;X),\ n\ge k,
\]
holds with a constant independent of $n$ if and only if $X$ has Rademacher type $2$.
\end{theorem}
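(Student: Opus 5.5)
Both inequalities in \eqref{eq:fixed-chaos-type2-equivalence} will be proved separately. The lower bound comes from a block construction inside $H_k$; the upper bound combines decoupling (Lemma~\ref{lem:decoupling-chaos}) with type $2$ applied one coordinate block at a time.

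\emph{Lower bound.} Fix $x_1,\dots,x_m\in X$. Choose $n=km$ and split the coordinates into $m$ disjoint blocks $B_1,\dots,B_m$ of size $k$. Set $f=\sum_{i=1}^m W_{B_i}x_i\in H_k(\Omega_n;X)$. Since the $W_{B_i}$ are independent symmetric signs, $\|f\|_{L^p}=\|\sum_i\eta_ix_i\|_{L^p(X)}$. For $j\in B_i$ we have $D_jf=W_{B_i}x_i$, so pointwise
\[
|\nabla f|_X=\Bigl(k\sum_i\|x_i\|^2\Bigr)^{1/2}.
\]
The definition of $\mathfrak L^{\rm hom}_{p,k}$ then gives $\sqrt k\,\|\sum\eta_ix_i\|_{L^p}\le \mathfrak L\,\sqrt k\,(\sum\|x_i\|^2)^{1/2}$. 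Kahane--Khintchine converts the $L^p$ norm to $L^2$, so $T_2(X)\le K_{2,p}\mathfrak L$ and $c_p=K_{2,p}^{-1}$. (For $p\ge2$ one may even take $c_p=1$.)

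\emph{Upper bound.} Write $f=\sum_{|A|=k}x_AW_A$ as a symmetric diagonal-free array $a_{i_1\dots i_k}=x_{\{i_1,\dots,i_k\}}/k!$. By Lemma~\ref{lem:decoupling-chaos} with $E=X$, $\|f\|_{L^p}$ is comparable to the decoupled chaos
\[
\tilde f=\sum^{\ne}a_{i_1\dots i_k}\eps^{(1)}_{i_1}\cdots\eps^{(k)}_{i_k}.
\]
Now condition on $\eps^{(2)},\dots,\eps^{(k)}$. Then $\tilde f=\sum_i\eps^{(1)}_i y_i$, which is a Rademacher sum with
\[
y_i=\sum^{\ne}_{i_2,\dots,i_k}a_{i i_2\dots i_k}\eps^{(2)}_{i_2}\cdots\eps^{(k)}_{i_k}.
\]
Type $2$, together with Kahane--Khintchine to pass between $L^p$ and $L^2$, gives
\[
\E_{\eps^{(1)}}\|\tilde f\|^p\lesssim_p T_2(X)^p\Bigl(\sum_i\|y_i\|^2\Bigr)^{p/2}.
\]
Next I must compare $(\sum_i\|y_i\|^2)^{1/2}$ with $|\nabla f|_X$. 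Here $D_if=\sum_{A\ni i}x_AW_A=\eps_i\,y_i(\eps)$, where $y_i$ is evaluated in the coupled form, so $\|D_if(\eps)\|=\|y_i^{\rm coupled}(\eps)\|$ and therefore $|\nabla f|_X=\|(y_i^{\rm coupled})_i\|_{\ell_2^n(X)}$. The key step is to apply Lemma~\ref{lem:decoupling-chaos} again, this time to the $(k-1)$-chaos $(y_i)_i$ with values in the Banach space $E=\ell_2^n(X)$. The array $i_2,\dots,i_k\mapsto (a_{ii_2\dots i_k})_i$ is symmetric and diagonal-free in $(i_2,\dots,i_k)$, except that the $i$-th component also vanishes when $i\in\{i_2,\dots,i_k\}$; that constraint is built into the array and is harmless. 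This lemma application lets me recouple the $k-1$ decoupled sequences, so
\[
\E\Bigl(\sum_i\|y_i^{\rm dec}\|^2\Bigr)^{p/2}\le D_{p,k-1}^p\,\E\Bigl(\sum_i\|y_i^{\rm coupled}\|^2\Bigr)^{p/2}=D_{p,k-1}^p\,\||\nabla f|_X\|_{L^p}^p.
\]
Chaining these estimates gives $\|f\|_{L^p}\le C_{p,k}T_2(X)\||\nabla f|_X\|_{L^p}$, and the factor $\sqrt k$ is absorbed into $C_{p,k}$.

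The main obstacle is bookkeeping in this recoupling step. I must check that the decoupled $y_i$ built from $\eps^{(2)},\dots,\eps^{(k)}$ match the lemma's decoupled form with $r=k-1$. This needs a consistent combinatorial normalization: the factor $(k-1)!$ versus $k!$, and the fact that $D_if$ carries the factor $\eps_i$, whose norm is $1$. The case $k=1$ is trivial, since there $y_i=x_{\{i\}}$ are constants and the argument reduces to type $2$ plus Kahane--Khintchine. The final ``consequently'' follows directly from \eqref{eq:fixed-chaos-type2-equivalence}.
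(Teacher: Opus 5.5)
Your proposal is correct and follows the paper's proof essentially step for step. The lower bound uses the same block construction $f=\sum_r W_{B_r}x_r$ with Kahane--Khintchine, and the upper bound uses the same three steps: decoupling, the conditional type-$2$ estimate in the first sign family, and recoupling of $(y_i)_i$ in $\ell_2^n(X)$ at order $k-1$. The only slip is cosmetic and concerns the normalization you already flagged. With $a_{i_1\dots i_k}=x_{\{i_1,\dots,i_k\}}/k!$ one gets $y_i^{\rm coupled}=\tfrac1k\,\eps_iD_if$ rather than $\eps_iD_if$. The paper takes $a=x_A$ and gets $(k-1)!\,\eps_jD_jf$ instead. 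Either way this only changes $k$-dependent constants.
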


\begin{proof}
We first prove the lower bound in \eqref{eq:fixed-chaos-type2-equivalence}.  This is the block construction giving the type $2$ obstruction.  Fix $x_1,\ldots,x_m\in X$ and partition the coordinates of $\Omega_{mk}$ into disjoint blocks
\[
   B_r:=\{(r-1)k+1,\ldots,rk\},
   \qquad 1\le r\le m.
\]
Let $\rho_r(\varepsilon)=\prod_{i\in B_r}\varepsilon_i$, and $f(\varepsilon)=\sum_{r=1}^m\rho_r(\varepsilon)x_r.$ The random variables $\rho_1,\ldots,\rho_m$ are independent Rademacher variables, and $f\in H_k(\Omega_{mk};X)$.  Hence
\[
   \Delta^{1/2}f=\sqrt{k}\,f.
\]
If $i\in B_r$, then $D_if=\rho_r x_r$, while the other block contributions vanish.  Therefore
\[
   |\nabla f|_X(\varepsilon)
   =
   \left(\sum_{r=1}^m\sum_{i\in B_r}
   \|\rho_r(\varepsilon)x_r\|_X^2\right)^{1/2}
   =
   \sqrt{k}\left(\sum_{r=1}^m\|x_r\|_X^2\right)^{1/2}.
\]
Consequently,
\[
   \mathfrak L_{p,k}^{\rm hom}(X)
   \ge
   \frac{
   \left(\mathbb E\left\|\sum_{r=1}^m\rho_r x_r\right\|_X^p\right)^{1/p}}
   {\left(\sum_{r=1}^m\|x_r\|_X^2\right)^{1/2}}.
\]
Taking the supremum over all finite sequences $(x_r)$ and using the Kahane--Khintchine inequality gives
\[
   \mathfrak L_{p,k}^{\rm hom}(X)
   \ge c_pT_2(X).
\]

We now prove the upper bound.  The case $k=1$ follows directly from type $2$ and Kahane--Khintchine.  Indeed, if
\[
   f(\varepsilon)=\sum_{j=1}^n\varepsilon_jx_j\in H_1(\Omega_n;X),
\]
then $\Delta^{1/2}f=f$ and
\[
   |\nabla f|_X(\varepsilon)=\left(\sum_{j=1}^n\|x_j\|_X^2\right)^{1/2}.
\]
Thus
\[
   \|\Delta^{1/2}f\|_{L^p(\Omega_n;X)}
   =\|f\|_{L^p(\Omega_n;X)}
   \le K_pT_2(X)
   \left(\sum_{j=1}^n\|x_j\|_X^2\right)^{1/2}
   =K_pT_2(X)\||\nabla f|_X\|_{L^p(\Omega_n)}.
\]

Assume from now on that $k\ge2$, and write
\[
   f(\varepsilon)=\sum_{|A|=k}W_A(\varepsilon)x_A\in H_k(\Omega_n;X).
\]
Define a symmetric diagonal free ordered array by
\[
   a_{i_1,\ldots,i_k}
   :=
   \begin{cases}
      x_{\{i_1,\ldots,i_k\}}, & i_1,\ldots,i_k \text{ pairwise distinct},\\
      0, & \text{otherwise}.
   \end{cases}
\]
Then
\[
   Q(\varepsilon)
   =
   \sum_{i_1,\ldots,i_k}^{\ne}
   a_{i_1,\ldots,i_k}\varepsilon_{i_1}\cdots\varepsilon_{i_k}
   =k!\,f(\varepsilon).
\]
Let
\[
   \widetilde Q(\varepsilon^{(1)},\ldots,\varepsilon^{(k)})
   :=
   \sum_{i_1,\ldots,i_k}^{\ne}
   a_{i_1,\ldots,i_k}
   \varepsilon_{i_1}^{(1)}\cdots\varepsilon_{i_k}^{(k)}.
\]
By Lemma~\ref{lem:decoupling-chaos},
\begin{equation}
\label{eq:first-decoupling-Q}
   k!\,\|f\|_{L^p(\Omega_n;X)}
   =\|Q\|_{L^p(X)}
   \le D_{p,k}\|\widetilde Q\|_{L^p(X)}.
\end{equation}
Separate the first family of signs:
\[
   \widetilde Q
   =
   \sum_{j=1}^n\varepsilon_j^{(1)}
   H_j(\varepsilon^{(2)},\ldots,\varepsilon^{(k)}),
\]
where
\[
   H_j
   =
   \sum_{\substack{i_2,\ldots,i_k\\ j,i_2,\ldots,i_k\ \text{pairwise distinct}}}
   \varepsilon_{i_2}^{(2)}\cdots\varepsilon_{i_k}^{(k)}
   x_{\{j,i_2,\ldots,i_k\}}.
\]
Conditioning on $\varepsilon^{(2)},\ldots,\varepsilon^{(k)}$, type $2$ and Kahane--Khintchine give
\[
   \left(\mathbb E_{\varepsilon^{(1)}}
   \left\|\sum_{j=1}^n\varepsilon_j^{(1)}H_j\right\|_X^p
   \right)^{1/p}
   \le
   K_pT_2(X)
   \left(\sum_{j=1}^n\|H_j\|_X^2\right)^{1/2}.
\]
Integrating in the remaining signs yields
\begin{equation}
\label{eq:type2-H-vector}
   \|\widetilde Q\|_{L^p(X)}
   \le
   K_pT_2(X)\,
   \|(H_1,\ldots,H_n)\|_{L^p(\ell_2^n(X))}.
\end{equation}

It remains to compare the decoupled vector $(H_1,\ldots,H_n)$ with the gradient of $f$.  Apply Lemma~\ref{lem:decoupling-chaos} in the Banach space $\ell_2^n(X)$ at order $k-1$ to the symmetric diagonal-free array
\[
   b_{i_2,\ldots,i_k}:=\big((b_{i_2,\ldots,i_k})_j\big)_{j=1}^n
   \in \ell_2^n(X),
\]
where
\[
   (b_{i_2,\ldots,i_k})_j
   :=
   \begin{cases}
      x_{\{j,i_2,\ldots,i_k\}}, & j,i_2,\ldots,i_k \text{ are pairwise distinct},\\
      0, & \text{otherwise}.
   \end{cases}
\]
Its decoupled chaos is $(H_1,\ldots,H_n)$, and its coupled chaos is $(H_1^c,\ldots,H_n^c)$, where
\[
   H_j^c(\eta)
   :=
   \sum_{\substack{i_2,\ldots,i_k\\ j,i_2,\ldots,i_k\ \text{pairwise distinct}}}
   \eta_{i_2}\cdots\eta_{i_k}
   x_{\{j,i_2,\ldots,i_k\}}.
\]
For every $j$ and every $A\subset\{1,\ldots,n\}$ with $j\in A$ and $|A|=k$, the ordered tuples $(i_2,\ldots,i_k)$ enumerating $A\setminus\{j\}$ occur $(k-1)!$ times.  Since our Walsh derivative is the multiplier $D_jW_A=\mathbf 1_{\{j\in A\}}W_A$, we have
\[
   H_j^c(\eta)=(k-1)!\,\eta_jD_jf(\eta).
\]
As $|\eta_j|=1$,
\[
   \|(H_1^c,\ldots,H_n^c)\|_{L^p(\ell_2^n(X))}
   =(k-1)!\,\||\nabla f|_X\|_{L^p(\Omega_n)}.
\]
The second decoupling step therefore gives
\begin{equation}
\label{eq:second-decoupling-H}
   \|(H_1,\ldots,H_n)\|_{L^p(\ell_2^n(X))}
   \le
   D_{p,k-1}(k-1)!\,
   \||\nabla f|_X\|_{L^p(\Omega_n)}.
\end{equation}
Combining \eqref{eq:first-decoupling-Q}, \eqref{eq:type2-H-vector}, and \eqref{eq:second-decoupling-H}, we obtain
\[
   k!\,\|f\|_{L^p(\Omega_n;X)}
   \le
   D_{p,k}K_pT_2(X)D_{p,k-1}(k-1)!\,
   \||\nabla f|_X\|_{L^p(\Omega_n)}.
\]
Since $\Delta^{1/2}f=\sqrt{k}\,f$, this implies
\[
   \|\Delta^{1/2}f\|_{L^p(\Omega_n;X)}
   \le
   \frac{D_{p,k}K_pD_{p,k-1}}{\sqrt{k}}\,
   T_2(X)\,
   \||\nabla f|_X\|_{L^p(\Omega_n)}.
\]
The upper bound follows after increasing the constant to cover $k=1$.
\end{proof}

\begin{corollary}
\label{cor:global-square-gradient-forces-type2}
Let $1<p<\infty$.  Suppose that there is a constant $C_p<\infty$ such that \eqref{eq:square-gradient-lower-riesz-global} holds for every $n\ge1$ and every $f:\Omega_n\to X$.  Then $X$ has Rademacher type $2$ and
\[
   T_2(X)\le c_p^{-1}C_p,
\]
where $c_p$ is the constant from Theorem~\ref{thm:fixed-chaos-type2}.
\end{corollary}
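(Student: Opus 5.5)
The corollary follows directly from the lower bound in Theorem~\ref{thm:fixed-chaos-type2}. We specialize the global hypothesis \eqref{eq:square-gradient-lower-riesz-global} to a single homogeneous chaos, say $k=1$ (any fixed $k$ works equally well). Since \eqref{eq:square-gradient-lower-riesz-global} is assumed for every $n\ge1$ and every $f:\Omega_n\to X$, it holds in particular for every $f\in H_1(\Omega_n;X)$ with $n\ge1$. Taking the supremum in the definition of $\mathfrak L_{p,1}^{\rm hom}(X)$ therefore gives
\[
   \mathfrak L_{p,1}^{\rm hom}(X)\le C_p .
\]

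Next we invoke the left inequality of \eqref{eq:fixed-chaos-type2-equivalence}, namely $c_pT_2(X)\le\mathfrak L_{p,1}^{\rm hom}(X)$. We need this to hold with $T_2(X)$ possibly infinite a priori. This is justified by the block construction in the proof of Theorem~\ref{thm:fixed-chaos-type2}. For any finite sequence $x_1,\ldots,x_m\in X$, the function $f=\sum_r\rho_rx_r$ (with $k=1$, simply $f=\sum_r\eps_rx_r$) satisfies
\[
   \Bigl(\E\Bigl\|\sum_r\eps_rx_r\Bigr\|_X^p\Bigr)^{1/p}\le \mathfrak L_{p,1}^{\rm hom}(X)\Bigl(\sum_r\|x_r\|_X^2\Bigr)^{1/2}.
\]
Combined with Kahane--Khintchine, this yields the Rademacher type $2$ inequality with constant at most $c_p^{-1}\mathfrak L_{p,1}^{\rm hom}(X)\le c_p^{-1}C_p<\infty$. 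Hence $X$ has type $2$ and $T_2(X)\le c_p^{-1}C_p$.

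The only point needing care is this finiteness issue: the lower bound must be read as producing type $2$ rather than presupposing it. Since the block argument bounds each finite Rademacher sum directly, no circularity arises. The constant $c_p$ is independent of $k$, so using $k=1$ matches the constant stated in Theorem~\ref{thm:fixed-chaos-type2}.
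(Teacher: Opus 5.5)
Your proposal is correct and is the paper's argument: you restrict the global estimate to $H_1(\Omega_n;X)$ to get $\mathfrak L_{p,1}^{\rm hom}(X)\le C_p$, then apply the lower-bound half of Theorem~\ref{thm:fixed-chaos-type2} with $k=1$. Your remark on finiteness is a valid clarification rather than a departure, since the block construction bounds every finite Rademacher sum directly; equivalently, under the theorem's convention, $T_2(X)=+\infty$ would contradict $\mathfrak L_{p,1}^{\rm hom}(X)\le C_p$.
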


\begin{proof}
The global estimate implies the same estimate on $H_1(\Omega_n;X)$ for every $n$.  Applying the lower bound half of Theorem~\ref{thm:fixed-chaos-type2} with $k=1$ gives the claim.
\end{proof}

\begin{corollary}
\label{cor:finite-cotype-not-sufficient}
Finite cotype is not sufficient for the dimension free pointwise square gradient lower Riesz estimate \eqref{eq:square-gradient-lower-riesz-global}.  In fact, cotype $2$ is not sufficient.
\end{corollary}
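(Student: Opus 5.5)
The plan is to combine Corollary~\ref{cor:global-square-gradient-forces-type2} with a concrete space that has cotype $2$ but fails Rademacher type $2$. I will take $X=\ell_r$ for a fixed $1\le r<2$; the choice $X=\ell_1$ also works.

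First, I will recall the standard facts about $\ell_r$ for $1\le r\le 2$. Such a space has cotype $2$, hence finite cotype. The proof is the usual one: apply Khintchine coordinatewise, then use Minkowski's inequality in $L^{r/2}$ in the reverse direction, which is valid since $r/2\le1$. This gives
\[
\Bigl(\E\Bigl\|\sum_j\eta_jx_j\Bigr\|_r^r\Bigr)^{1/r}\gtrsim \Bigl\|\Bigl(\sum_j|x_j|^2\Bigr)^{1/2}\Bigr\|_r\ge\Bigl(\sum_j\|x_j\|_r^2\Bigr)^{1/2}.
\]
Kahane--Khintchine then changes the exponent from $r$ to $2$.

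Second, I will check that $\ell_r$ fails type $2$. Take $x_j=e_j$, the unit vectors, for $1\le j\le m$. Then $\|\sum_j\eta_je_j\|_r=m^{1/r}$ for every choice of signs, while $(\sum_j\|e_j\|_r^2)^{1/2}=m^{1/2}$. The ratio $m^{1/r-1/2}$ tends to infinity as $m\to\infty$ because $r<2$, so $T_2(\ell_r)=\infty$.

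Finally, I will argue by contradiction. Suppose \eqref{eq:square-gradient-lower-riesz-global} held for $X=\ell_r$ with a dimension-free constant, for some $1<p<\infty$. Then Corollary~\ref{cor:global-square-gradient-forces-type2} would give $T_2(\ell_r)<\infty$, which contradicts the second step.

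For extra transparency, the failure can be exhibited directly through the $k=1$ block construction in the proof of Theorem~\ref{thm:fixed-chaos-type2}. Take $f(\eps)=\sum_{j=1}^m\eps_je_j$. Then the left side of \eqref{eq:square-gradient-lower-riesz-global} equals $m^{1/r}$ and the right side equals $C\,m^{1/2}$. The same example works on every fixed chaos $H_k$ by replacing $\eps_j$ with the block products $\rho_r$.

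There is no real obstacle here. The only point needing care is stating the cotype $2$ property of $\ell_r$ correctly, which I will either cite from \cite{LedouxTalagrand1991,Pisier2016} or derive via the reverse-Minkowski argument above.
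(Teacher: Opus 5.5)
Your proposal is correct and follows essentially the paper's proof: it takes $\ell_r$ with $1<r<2$ (you also allow $r=1$), which has cotype $2$ but not type $2$, and invokes Corollary~\ref{cor:global-square-gradient-forces-type2}. Like the paper, you also show the failure directly with $f(\varepsilon)=\sum_j\varepsilon_je_j$, where the two sides are $m^{1/r}$ and $m^{1/2}$. The only additions are your self-contained checks of the cotype $2$ and non-type-$2$ properties of $\ell_r$, which the paper just cites, and both checks are sound.
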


\begin{proof}
For $1<r<2$, the space $\ell_r$ has cotype $2$ but does not have Rademacher type $2$.  Hence Corollary~\ref{cor:global-square-gradient-forces-type2} already implies that \eqref{eq:square-gradient-lower-riesz-global} cannot hold on $\ell_r$ with a dimension free constant.

The failure is visible in the first Walsh chaos.  Let $e_1,\ldots,e_n$ be the standard unit vectors of $\ell_r$ and set
\[
   f(\varepsilon):=\sum_{j=1}^n\varepsilon_je_j.
\]
Then $f\in H_1(\Omega_n;\ell_r)$, $\Delta^{1/2}f=f$, and
\[
   \|\Delta^{1/2}f\|_{L^p(\Omega_n;\ell_r)}
   =
   \left(\mathbb E_\varepsilon
   \left\|\sum_{j=1}^n\varepsilon_je_j\right\|_{\ell_r}^p
   \right)^{1/p}
   =n^{1/r}.
\]
On the other hand,
\[
   \||\nabla f|_{\ell_r}\|_{L^p(\Omega_n)}
   =
   \left(\sum_{j=1}^n\|e_j\|_{\ell_r}^2\right)^{1/2}
   =n^{1/2}.
\]
Thus a dimension free estimate would imply $n^{1/r}\le Cn^{1/2}$ for all $n$, which is impossible because $1/r>1/2$.
\end{proof}

\begin{remark}[]
\label{rem:randomized-vs-square-gradient}
The corollary \ref{cor:finite-cotype-not-sufficient} concerns only the pointwise square gradient estimate \eqref{eq:square-gradient-lower-riesz-global}.  It is not a counterexample to the randomized gradient lower Riesz problem of Ivanisvili--Volberg \cite{IvanisviliVolberg2022}.  For example, in the test function used above,
\[
   f(\varepsilon)=\sum_{j=1}^n\varepsilon_je_j\in \ell_r,
   \qquad 1<r<2,
\]
the square gradient norm equals $n^{1/2}$, whereas the randomized gradient norm satisfies
\[
   \left(\mathbb E_\delta
   \left\|\sum_{j=1}^n\delta_jD_jf\right\|_{L^p(\Omega_n;\ell_r)}^p
   \right)^{1/p}
   =
   \left(\mathbb E_{\delta,\varepsilon}
   \left\|\sum_{j=1}^n\delta_j\varepsilon_je_j\right\|_{\ell_r}^p
   \right)^{1/p}
   =n^{1/r},
\]
which is of the same order as $\|\Delta^{1/2}f\|_{L^p(\Omega_n;\ell_r)}$. % Thus this example separates the pointwise square-gradient estimate from the randomized-gradient estimate, rather than disproving the latter.
\end{remark}

\section{Sharpness of the analytic paraproduct tail bound}
\label{sec:paraproduct-sharpness}

We now consider the one variable analytic obstruction used in Volberg's tail space argument \cite{Volberg2024}.  We can show that under a standard corner asymptotics, the paraproduct tail operator has norm of order $d^{-\alpha}$ on tail spaces. % Thus that analytic step cannot be improved to order $d^{-1}$ when $\alpha<1$.

Let $Y$ be a complex Banach space.  For an integer $d\geq1$ define
\[
 H_d^\infty(\D;Y)
 :=\{g\in H^\infty(\D;Y): g^{(j)}(0)=0\text{ for }0\leq j<d\}.
\]
Equivalently, $g\in H_d^\infty(\D;Y)$ if and only if its Taylor expansion starts at degree at least $d$.

Let $\varphi\in H^\infty(\D)$ be nonconstant and put
\[
 T_\varphi g(z):=\int_0^z g(\zeta)\varphi'(\zeta)\,d\zeta,
 \qquad z\in\D.
\]
The integral is path independent because the integrand is holomorphic.

\begin{lemma}
\label{lem:tail-integration}
Let $K\geq0$ be an integer and let $h\in H^\infty(\D;Y)$ satisfy
$h^{(j)}(0)=0$ for $0\leq j<K$.  Define $Ih(z):=\int_0^z h(\zeta)\,d\zeta.$ Then
\[
 \|Ih\|_{H^\infty(\D;Y)}
 \leq \frac{1}{K+1}\|h\|_{H^\infty(\D;Y)}.
\]
\end{lemma}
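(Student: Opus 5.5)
The idea is to reduce the estimate to a scalar pointwise computation along radial segments, using that $h$ vanishes to order $K$ at the origin. Write $h(\zeta)=\zeta^K u(\zeta)$, where $u\in H^\infty(\D;Y)$ is holomorphic. This factorization holds because the Taylor series of $h$ starts at degree $K$, so $u=\sum_{m\ge K}\widehat h(m)\zeta^{m-K}$ is holomorphic in $\D$. To see that $\|u\|_{H^\infty(\D;Y)}\le\|h\|_{H^\infty(\D;Y)}$, I would apply the Schwarz lemma to the scalar functions $\langle h,y^*\rangle$ with $\|y^*\|\le1$. Concretely, on $|\zeta|=r$ one has $\|u(\zeta)\|\le r^{-K}\sup_{|\zeta|=r}\|h\|\le r^{-K}\|h\|_\infty$. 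The maximum principle, applied to the subharmonic function $\|u\|$ (or to each $\langle u,y^*\rangle$), then gives $\sup_{|\zeta|\le r}\|u\|\le r^{-K}\|h\|_\infty$, and letting $r\uparrow1$ yields the claim.

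Next, fix $z\in\D$ and integrate along the segment $\zeta=sz$ with $s\in[0,1]$, which is legitimate since the integral is path independent. This gives
\[
Ih(z)=\int_0^1 h(sz)\,z\,ds=z^{K+1}\int_0^1 s^K u(sz)\,ds .
\]
The Bochner integral bound then gives
\[
\|Ih(z)\|_Y\le |z|^{K+1}\|u\|_\infty\int_0^1 s^K\,ds\le \frac{1}{K+1}\|h\|_{H^\infty(\D;Y)}.
\]
Taking the supremum over $z\in\D$ finishes the proof. One could alternatively skip the factorization and use the direct bound $\|h(sz)\|\le s^K|z|^K\|h\|_\infty$, which is the vector-valued Schwarz lemma again and yields the same integral $\int_0^1 s^K\,ds=1/(K+1)$.

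The only point that needs care is the vector-valued Schwarz estimate $\|h(\zeta)\|_Y\le|\zeta|^K\|h\|_\infty$. This is handled by Hahn--Banach: for each $y^*$ in the unit ball of $Y^*$, the scalar function $\langle h,y^*\rangle$ is bounded by $\|h\|_\infty$ and vanishes to order $K$ at $0$, so the classical Schwarz lemma gives $|\langle h(\zeta),y^*\rangle|\le|\zeta|^K\|h\|_\infty$. Taking the supremum over $y^*$ gives the vector bound. Everything else in the argument is routine.
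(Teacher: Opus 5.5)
Your proof is correct and takes essentially the same route as the paper. The paper also factors $h(z)=z^Kq(z)$, bounds $\|q\|_{H^\infty}\le\|h\|_{H^\infty}$ via the maximum principle on $|z|\le r$ with $r\uparrow1$, and integrates along $[0,z]$ to get $|z|^{K+1}\int_0^1 t^K\,dt\le 1/(K+1)$; your Hahn--Banach/scalar Schwarz-lemma variant is an equally valid way to reach the same pointwise bound.
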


\begin{proof}
Write $h(z)=z^Kq(z)$, where $q\in H(\D;Y)$.  We first check that
\[
 \|q\|_{H^\infty(\D;Y)}\leq\|h\|_{H^\infty(\D;Y)}.
\]
Indeed, for $0<r<1$, the maximum principle applied to $q$ on $|z|<r$ gives
\[
 \sup_{|z|\leq r}\|q(z)\|_Y
 \leq r^{-K}\sup_{|z|\leq r}\|h(z)\|_Y
 \leq r^{-K}\|h\|_{H^\infty(\D;Y)}.
\]
Letting $r\uparrow1$ after fixing $z$ proves the claim.

For $z\in\D$, integrate over the segment $[0,z]$:
\[
\begin{aligned}
 \|Ih(z)\|_Y
 &=\left\|\int_0^1h(tz)z\,dt\right\|_Y                                      \\
 &\leq\int_0^1|z|\,\|h(tz)\|_Y\,dt                                      \\
 &=\int_0^1|z|\,|tz|^K\,\|q(tz)\|_Y\,dt                                  \\
 &\leq\|q\|_{H^\infty(\D;Y)}|z|^{K+1}\int_0^1t^K\,dt                    \\
 &\leq\frac{1}{K+1}\|h\|_{H^\infty(\D;Y)}.
\end{aligned}
\]
Taking the supremum over $z\in\D$ proves the lemma.
\end{proof}

\begin{lemma}
\label{lem:coef-sum}
If $0<\alpha<1$, then there is a constant $C_\alpha<\infty$ such that for every integer $d\geq1$,
\[
 \sum_{m=1}^\infty\frac{m^{-\alpha}}{d+m}\leq C_\alpha d^{-\alpha}.
\]
\end{lemma}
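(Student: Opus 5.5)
The plan is to split the sum at $m=d$ and bound each piece by an elementary integral comparison. Write
\[
 \sum_{m=1}^\infty\frac{m^{-\alpha}}{d+m}
 =\sum_{m\le d}\frac{m^{-\alpha}}{d+m}+\sum_{m>d}\frac{m^{-\alpha}}{d+m}.
\]

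For the first piece, $m\le d$, we have $d+m\ge d$. Since $m^{-\alpha}$ is decreasing, comparison with an integral gives
\[
 \sum_{m\le d}m^{-\alpha}\le 1+\int_1^d s^{-\alpha}\,ds\le \frac{d^{1-\alpha}}{1-\alpha}.
\]
The last step uses $1+\frac{d^{1-\alpha}-1}{1-\alpha}\le \frac{d^{1-\alpha}}{1-\alpha}$, which holds because $\alpha<1$. Dividing by $d$, the first piece is at most $(1-\alpha)^{-1}d^{-\alpha}$.

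For the second piece, $m>d$, we have $d+m\ge m$. Hence the summand is at most $m^{-1-\alpha}$, and again by integral comparison
\[
 \sum_{m>d}m^{-1-\alpha}\le \int_d^\infty s^{-1-\alpha}\,ds=\alpha^{-1}d^{-\alpha}.
\]
This uses $\alpha>0$.

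Adding the two estimates gives the lemma with $C_\alpha=(1-\alpha)^{-1}+\alpha^{-1}$. There is no real obstacle; the only care needed is the monotone integral comparison at the endpoints. Both constants blow up as $\alpha\to0$ or $\alpha\to1$, which is consistent with the logarithmic behavior the sum has at those endpoints.

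As a sanity check, an equivalent route is to compare the whole sum with $\int_0^\infty \frac{s^{-\alpha}}{d+s}\,ds$. After substituting $s=du$ this equals $d^{-\alpha}\pi/\sin(\pi\alpha)$. This alternative would give a sharper constant but is not needed.
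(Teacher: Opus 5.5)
Your proposal is correct and follows the paper's proof essentially verbatim: the same split at $m=d$, the bound $\frac1d\bigl(1+\int_1^d s^{-\alpha}\,ds\bigr)$ for $m\le d$, and the bound $\int_d^\infty s^{-1-\alpha}\,ds=\alpha^{-1}d^{-\alpha}$ for $m>d$. The only addition is that you make the constant explicit as $C_\alpha=(1-\alpha)^{-1}+\alpha^{-1}$, which the paper leaves implicit.
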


\begin{proof}
Split the sum at $m=d$:
\[
 \sum_{m=1}^d\frac{m^{-\alpha}}{d+m}
 \leq\frac1d\sum_{m=1}^d m^{-\alpha}
 \leq\frac1d\left(1+\int_1^d x^{-\alpha}\,dx\right)
 \leq C_\alpha d^{-\alpha},
\]
and
\[
 \sum_{m=d+1}^\infty\frac{m^{-\alpha}}{d+m}
 \leq\sum_{m=d+1}^\infty m^{-\alpha-1}
 \leq\int_d^\infty x^{-\alpha-1}\,dx
 =\frac1\alpha d^{-\alpha}.
\]
Combining the two estimates proves the claim.
\end{proof}

\begin{theorem}
\label{thm:paraproduct-sharp}
Let $0<\alpha<1$ and let
\[
 \varphi(z)=\sum_{m=1}^\infty c_mz^m
\]
be a bounded nonconstant holomorphic function on $\D$.  Assume that the following two conditions hold.
\begin{enumerate}
\item[(A1)] There is $A<\infty$ such that
\[
 |c_m|\leq Am^{-1-\alpha}\qquad(m\geq1).
\]
\item[(A2)] There is $b\in\mathbb C\setminus\{0\}$ such that, as $r\uparrow1$,
\[
 \varphi'(r)=b(1-r)^{\alpha-1}+o\bigl((1-r)^{\alpha-1}\bigr).
\]
\end{enumerate}
Then, for every nonzero complex Banach space $Y$, there are constants
$0<c_{\alpha,\varphi}\leq C_{\alpha,\varphi}<\infty$ such that for every integer $d\geq1$,
\[
 c_{\alpha,\varphi}d^{-\alpha}
 \leq
 \|T_\varphi:H_d^\infty(\D;Y)\to H^\infty(\D;Y)\|
 \leq
 C_{\alpha,\varphi}d^{-\alpha}.
\]
Equivalently,
\[
 \|T_\varphi:H_d^\infty(\D;Y)\to H^\infty(\D;Y)\|
 \asymp_{\alpha,\varphi} d^{-\alpha}.
\]
\end{theorem}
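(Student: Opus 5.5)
For the upper bound, we expand $\varphi'$ and split off the low-frequency part. Write $\varphi'(\zeta)=\sum_{m\ge1}mc_m\zeta^{m-1}$, so that by (A1) the coefficients satisfy $|mc_m|\le Am^{-\alpha}$. For $g\in H_d^\infty(\D;Y)$, the product $g(\zeta)\zeta^{m-1}$ vanishes to order at least $d+m-1$ at the origin. Hence Lemma~\ref{lem:tail-integration}, applied with $K=d+m-1$, gives
\[
 \Bigl\|\int_0^z g(\zeta)\zeta^{m-1}\,d\zeta\Bigr\|_{H^\infty}\le \frac{\|g\|_\infty}{d+m}.
\]
Since $g$ is bounded and the series for $\varphi'$ converges locally uniformly, we may integrate termwise: for fixed $z$, $T_\varphi g(z)=\sum_m mc_m\int_0^z g\zeta^{m-1}d\zeta$. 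Summing the bounds, using the triangle inequality and Lemma~\ref{lem:coef-sum}, we obtain
\[
 \|T_\varphi g\|_\infty\le A\sum_m \frac{m^{-\alpha}}{d+m}\|g\|_\infty\le AC_\alpha d^{-\alpha}\|g\|_\infty.
\]
This step is routine.

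For the lower bound, we reduce to the scalar case. Fix a unit vector $y\in Y$ and take $g(z)=z^d u(z)\,y$ with $u$ a scalar function, so that $\|T_\varphi g\|=\|T_\varphi(z^du)\|_{H^\infty(\D)}$. The natural test function is $g(z)=z^d y$, which has norm $1$. Evaluating along the radius toward the corner gives
\[
 T_\varphi g(r)=\int_0^r s^d\varphi'(s)\,ds.
\]
By (A2), $\varphi'(s)=b(1-s)^{\alpha-1}(1+o(1))$ as $s\uparrow1$. We then let $r\uparrow1$ and estimate $\int_0^1 s^d\varphi'(s)\,ds$. The integral converges absolutely because $\alpha-1>-1$, and near $s=0$ the factor $\varphi'$ is bounded.

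The main step is to show $\bigl|\int_0^1 s^d\varphi'(s)\,ds\bigr|\ge c\,d^{-\alpha}$ for large $d$; small $d$ are then handled by nonvanishing plus finiteness, or simply absorbed by adjusting the constant. Fix $\eta>0$ small and choose $\delta$ so that $|\varphi'(s)-b(1-s)^{\alpha-1}|\le\eta|b|(1-s)^{\alpha-1}$ on $[1-\delta,1)$. The main term is
\[
 b\int_0^1 s^d(1-s)^{\alpha-1}\,ds=b\,B(d+1,\alpha)\sim b\,\Gamma(\alpha)d^{-\alpha}.
\]
The pieces are controlled as follows:
\begin{itemize}
\item The error on $[1-\delta,1)$ is at most $\eta|b|B(d+1,\alpha)$.
\item The contribution of $[0,1-\delta]$, both from $\varphi'$ and from the main term, is $O\bigl(\sup_{[0,1-\delta]}(|\varphi'|+|b|(1-s)^{\alpha-1})\cdot(1-\delta)^{d+1}\bigr)$, which is exponentially small in $d$.
\end{itemize}
Choosing $\eta<1/2$ gives $|T_\varphi g(r)|\gtrsim |b|\Gamma(\alpha)d^{-\alpha}$ for $r$ near $1$ and $d\ge d_0$.

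For the finitely many $d<d_0$, we need $T_\varphi(z^d\cdot)\ne0$. This holds because $\varphi$ is nonconstant, so $\varphi'\not\equiv0$ and hence $\int_0^z\zeta^d\varphi'\not\equiv0$. Each such $d$ therefore gives a positive norm, and we take the minimum over these with the asymptotic constant.

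The only delicate point is the uniform control of the $o(\cdot)$ term. That term controls radial behavior only, which is exactly what the evaluation along $[0,1)$ uses, so no further boundary analysis such as Lehman--Warschawski should be needed for this argument.
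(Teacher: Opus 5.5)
Your proposal is correct and matches the paper's proof essentially step for step. For the upper bound you integrate $\varphi'=\sum_m mc_m\zeta^{m-1}$ termwise and apply Lemma~\ref{lem:tail-integration} with $K=d+m-1$ and Lemma~\ref{lem:coef-sum}. For the lower bound you use the test function $z^dy$, evaluate radially, and apply the asymptotics $B(d+1,\alpha)\sim\Gamma(\alpha)d^{-\alpha}$ with the same $\eta$--$\delta$ splitting, handling the finitely many small $d$ by the nonvanishing of $T_\varphi(z^dy)$.
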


%\begin{remark}[]
%\label{rem:analytic-corners}
If $\varphi$ is the conformal map from $\D$ onto a Jordan domain whose boundary consists, near $\varphi(1)$, of two analytic arcs meeting with interior angle $\pi\alpha$, $0<\alpha<1$, then the classical Lehman--Warschawski corner expansion gives
\[
 \varphi'(r)=b(1-r)^{\alpha-1}+o((1-r)^{\alpha-1})
\]
for some $b\neq0$ \cite{Lehman1957,Warschawski1942}.  In Volberg's argument, the coefficient estimate $|c_m|\lesssim m^{-1-\alpha}$ is precisely the estimate for the conformal map onto his domain $O_\alpha$ \cite{Volberg2024}.  Thus Theorem~\ref{thm:paraproduct-sharp} says that the analytic paraproduct step cannot, by itself, improve the factor $d^{-\alpha}$ to $d^{-1}$ when $\alpha<1$.
%\end{remark}

\begin{proof}
We prove the upper bound first.  Fix $g\in H_d^\infty(\D;Y)$.  Since
\[
 \varphi'(z)=\sum_{m=1}^\infty mc_mz^{m-1}\qquad(z\in\D),
\]
we have, initially uniformly on compact subsets of $\D$,
\[
 T_\varphi g(z)
 =\sum_{m=1}^\infty mc_m\int_0^z\zeta^{m-1}g(\zeta)\,d\zeta.
\]
For each $m\geq1$, the function $\zeta\mapsto\zeta^{m-1}g(\zeta)$ belongs to
$H_{d+m-1}^\infty(\D;Y)$ and has $H^\infty$ norm at most $\|g\|_{H^\infty(\D;Y)}$.  By Lemma~\ref{lem:tail-integration},
\[
 \left\|\int_0^z\zeta^{m-1}g(\zeta)\,d\zeta\right\|_{H^\infty(\D;Y)}
 \leq\frac{1}{d+m}\|g\|_{H^\infty(\D;Y)}.
\]
Therefore, using (A1) and Lemma~\ref{lem:coef-sum},
\[
\begin{aligned}
 \|T_\varphi g\|_{H^\infty(\D;Y)}
 &\leq\sum_{m=1}^\infty m|c_m|\,
  \left\|\int_0^z\zeta^{m-1}g(\zeta)\,d\zeta\right\|_{H^\infty(\D;Y)}       \\
 &\leq A\|g\|_{H^\infty(\D;Y)}
  \sum_{m=1}^\infty\frac{m^{-\alpha}}{d+m}                              \\
 &\leq AC_\alpha d^{-\alpha}\|g\|_{H^\infty(\D;Y)}.
\end{aligned}
\]
This proves the upper bound.  The same estimate also justifies convergence of the displayed series in the operator norm relevant to the argument.

We now prove the lower bound.  Choose $y\in Y$ with $\|y\|_Y=1$ and set $g_d(z)=z^dy.$ Then $g_d\in H_d^\infty(\D;Y)$ and $\|g_d\|_{H^\infty(\D;Y)}=1$.  For $0<\rho<1$,
\[
 T_\varphi g_d(\rho)=y\int_0^\rho r^d\varphi'(r)\,dr.
\]
Condition (A2) implies that $\varphi'$ is integrable on $(0,1)$: near $1$ it is bounded by a constant multiple of $(1-r)^{\alpha-1}$, and $\alpha>0$.  Hence, by letting $\rho\uparrow1$,
\[
 \|T_\varphi g_d\|_{H^\infty(\D;Y)}
 \geq\left|\int_0^1 r^d\varphi'(r)\,dr\right|.
\]
We claim that
\[
 \int_0^1 r^d\varphi'(r)\,dr
 =bB(d+1,\alpha)+o(d^{-\alpha}),
 \qquad d\to\infty,
\]
where $B$ is the beta function.  To see this, fix $\eta>0$.  By (A2), choose $\delta\in(0,1)$ so that
\[
 |\varphi'(r)-b(1-r)^{\alpha-1}|
 \leq\eta(1-r)^{\alpha-1},
 \qquad 1-\delta<r<1.
\]
On $[0,1-\delta]$, the contribution of $\int_0^{1-\delta}r^d\varphi'(r)\,dr$ is $O((1-\delta)^d)$.  On $[1-\delta,1]$, the preceding inequality gives an error at most
\[
 \eta\int_{1-\delta}^1r^d(1-r)^{\alpha-1}\,dr
 \leq\eta B(d+1,\alpha).
\]
The missing part of the beta integral over $[0,1-\delta]$ is also $O((1-\delta)^d)$.  Since
\[
 B(d+1,\alpha)=\int_0^1 r^d(1-r)^{\alpha-1}\,dr
 =\frac{\Gamma(d+1)\Gamma(\alpha)}{\Gamma(d+1+\alpha)}
 \sim\Gamma(\alpha)d^{-\alpha},
\]
and since $\eta>0$ is arbitrary, the claim follows.  Thus
\[
 \int_0^1r^d\varphi'(r)\,dr
 =b\Gamma(\alpha)d^{-\alpha}+o(d^{-\alpha}).
\]
Consequently, for all sufficiently large $d$,
\[
 \|T_\varphi:H_d^\infty(\D;Y)\to H^\infty(\D;Y)\|
 \geq\|T_\varphi g_d\|_{H^\infty(\D;Y)}
 \geq\frac{|b|\Gamma(\alpha)}2d^{-\alpha}.
\]
For the finitely many remaining $d$, the operator norm is positive: indeed, $T_\varphi(z^dy)$ is not identically zero because its derivative is $z^d\varphi'(z)y$ and $\varphi'$ is not identically zero.  Decreasing the constant if necessary gives the lower bound for every $d\geq1$.
\end{proof}

\subsection*{Iteration of Bernstein--Markov estimates}
\label{sec:bm-iteration}

We show that once the first order Bernstein--Markov estimate is known uniformly for finite Hilbertian sums of the target space, the higher order Hilbertian square functions follow by iteration.  %The proof uses only the multiplier property of the derivatives and contains no additional Banach-space input.

\begin{theorem}
\label{thm:bm-iteration}
Fix $1<p<\infty$, $a\geq0$, and $C<\infty$.  Let $X$ be a Banach space.  Assume that, for every $M,n,d\geq1$ and every
$F\in\mathcal P_d(\Omega_n;\ell_2^M(X))$,
\begin{equation}
\label{eq:BM1}
 \|G^{\mathrm{ord}}_{1,\ell_2^M(X)}F\|_{L^p(\Omega_n)}
 \leq Cd^a\|F\|_{L^p(\Omega_n;\ell_2^M(X))}.
\end{equation}
Then, for every integer $k\geq1$, every $n,d\geq1$, and every $f\in\mathcal P_d(\Omega_n;X)$,
\[
 \|G^{\mathrm{ord}}_{k,X}f\|_{L^p(\Omega_n)}
 \leq C^k d^{ka}\|f\|_{L^p(\Omega_n;X)}.
\]
Consequently,
\[
 \|G^{\mathrm{dis}}_{k,X}f\|_{L^p(\Omega_n)}
 \leq C^k d^{ka}\|f\|_{L^p(\Omega_n;X)}.
\]
\end{theorem}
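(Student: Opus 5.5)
Induction on $k$. The case $k=1$ is the hypothesis \eqref{eq:BM1} with $M=1$, since $\ell_2^1(X)=X$ and $G^{\mathrm{ord}}_{1,X}f=|\nabla f|_X$. For the inductive step, assume the bound for some $k\ge1$ and all $f\in\mathcal P_d(\Omega_n;X)$. Given $f$, form the vector-valued function
\[
 F(\eps):=\bigl(D_{i_1}\cdots D_{i_k}f(\eps)\bigr)_{(i_1,\ldots,i_k)\in\{1,\ldots,n\}^k}\in\ell_2^{M}(X),\qquad M=n^k .
\]
By definition, $\|F(\eps)\|_{\ell_2^M(X)}=G^{\mathrm{ord}}_{k,X}f(\eps)$ pointwise. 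Because the $D_i$ are Walsh multipliers that never increase degree, each coordinate of $F$ lies in $\mathcal P_d(\Omega_n;X)$, and so $F\in\mathcal P_d(\Omega_n;\ell_2^M(X))$.

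Next, $D_j$ acts on $F$ coordinatewise, since $F$ is a finite tuple and $D_j$ is linear. Hence
\[
 \|D_jF(\eps)\|_{\ell_2^M(X)}^2=\sum_{i_1,\ldots,i_k}\|D_jD_{i_1}\cdots D_{i_k}f(\eps)\|_X^2 .
\]
Summing over $j$ and relabelling $j$ as $i_{k+1}$ (the $D$'s commute, and in any case the ordered sum ranges over all tuples), we get the pointwise identity
\[
 G^{\mathrm{ord}}_{1,\ell_2^M(X)}F=G^{\mathrm{ord}}_{k+1,X}f .
\]
Applying \eqref{eq:BM1} to $F$ and then the inductive hypothesis gives
\[
 \|G^{\mathrm{ord}}_{k+1,X}f\|_{L^p}\le Cd^a\|F\|_{L^p(\ell_2^M(X))}=Cd^a\|G^{\mathrm{ord}}_{k,X}f\|_{L^p}\le C^{k+1}d^{(k+1)a}\|f\|_{L^p(\Omega_n;X)} .
\]

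The distinct-index statement then follows from the pointwise comparison $G^{\mathrm{dis}}_{k,X}f\le G^{\mathrm{ord}}_{k,X}f$ recorded in Section~\ref{sec:preliminaries}. No step is a serious obstacle. The only points needing care are that $F$ genuinely has degree at most $d$ (so the hypothesis applies with the same $d$ at every stage) and that \eqref{eq:BM1} is used with $M=n^k$. This is exactly why the hypothesis is required uniformly in $M$.
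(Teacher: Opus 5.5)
Your proposal is correct and essentially identical to the paper's proof. Both argue by induction via the $\ell_2^{M}(X)$-valued function of all ordered lower-order derivatives, using the pointwise identities $\|F\|=G^{\mathrm{ord}}_{k,X}f$ and $G^{\mathrm{ord}}_{1,\ell_2^M(X)}F=G^{\mathrm{ord}}_{k+1,X}f$, degree preservation by the $D_i$, and the uniformity of \eqref{eq:BM1} in $M$. The only difference is cosmetic: you index the inductive step as $k\to k+1$, while the paper writes it as $k-1\to k$.
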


\begin{proof}
We prove the ordered estimate by induction on $k$.  The case $k=1$ is \eqref{eq:BM1} with $M=1$.  Assume that the estimate is known at order $k-1$.  Let $I_{k-1}=\{1,\ldots,n\}^{k-1},$ $Y=\ell_2^{I_{k-1}}(X),$ and define the $Y$-valued polynomial
\[
 F(\eps)=\bigl(D_{i_1}\cdots D_{i_{k-1}}f(\eps)\bigr)_{(i_1,\ldots,i_{k-1})\in I_{k-1}}.
\]
Each $D_i$ is a Walsh multiplier by $0$ or $1$, hence it does not increase degree; therefore $F\in\mathcal P_d(\Omega_n;Y)$.  Moreover, pointwise in $\eps$,
\[
 \|F(\eps)\|_Y=G^{\mathrm{ord}}_{k-1,X}f(\eps),
\]
and
\[
\begin{aligned}
 G^{\mathrm{ord}}_{1,Y}F(\eps)
 &=\left(
 \sum_{j=1}^n
 \sum_{(i_1,\ldots,i_{k-1})\in I_{k-1}}
 \|D_jD_{i_1}\cdots D_{i_{k-1}}f(\eps)\|_X^2
 \right)^{1/2}                                            \\
 &=G^{\mathrm{ord}}_{k,X}f(\eps).
\end{aligned}
\]
Applying \eqref{eq:BM1} to $F$ and then the induction hypothesis gives
\[
\begin{aligned}
 \|G^{\mathrm{ord}}_{k,X}f\|_{L^p}
 &=\|G^{\mathrm{ord}}_{1,Y}F\|_{L^p}                                      \\
 &\leq Cd^a\|F\|_{L^p(\Omega_n;Y)}                                      \\
 &=Cd^a\|G^{\mathrm{ord}}_{k-1,X}f\|_{L^p}                              \\
 &\leq C^kd^{ka}\|f\|_{L^p(\Omega_n;X)}.
\end{aligned}
\]
The estimate for $G^{\mathrm{dis}}_{k,X}$ follows from the pointwise inequality
$G^{\mathrm{dis}}_{k,X}f\leq G^{\mathrm{ord}}_{k,X}f$ in Section~\ref{sec:preliminaries}.
\end{proof}

\begin{corollary}
\label{cor:volberg-higher-order-bm}
Let $1<p<\infty$ and assume that $X^*$ has Rademacher type $2$.  Let $\alpha\in(0,1]$ be the Pisier holomorphic semigroup exponent, chosen uniformly for the finite Hilbertian sums $\ell_2^M(X)$.  Define
\[
 a_{p,X}=
 \begin{cases}
  (2-\alpha)/p, & 1<p<2,\\
  1-\alpha/2, & 2\leq p<\infty.
 \end{cases}
\]
Then, for every $k\geq1$, every $n,d\geq1$, and every $f\in\mathcal P_d(\Omega_n;X)$,
\[
 \|G^{\mathrm{ord}}_{k,X}f\|_{L^p(\Omega_n)}
 \leq C_{p,X}^k d^{ka_{p,X}}\|f\|_{L^p(\Omega_n;X)}.
\]
In particular,
\[
 \left\|
 \left(
 \sum_{\substack{A\subseteq\{1,\ldots,n\}\\ |A|=k}}
 \|D_Af\|_X^2
 \right)^{1/2}
 \right\|_{L^p(\Omega_n)}
 \leq C_{p,X}^k d^{ka_{p,X}}\|f\|_{L^p(\Omega_n;X)}.
\]
\end{corollary}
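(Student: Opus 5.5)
The corollary should follow directly from Theorem~\ref{thm:bm-iteration}. Once the first order Bernstein--Markov estimate \eqref{eq:BM1} is known with exponent $a=a_{p,X}$ and a constant $C=C_{p,X}$ that is uniform in $M$, the iteration theorem gives both displayed estimates. The ordered bound comes directly, and the distinct bound follows from the pointwise comparison $G^{\mathrm{dis}}_{k,X}f\le G^{\mathrm{ord}}_{k,X}f$ and the identity $D_A=\prod_{j\in A}D_j$. So the whole task is to verify the hypothesis \eqref{eq:BM1} for every $\ell_2^M(X)$ with one constant.

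To check the hypothesis, I would invoke the first order result from \cite{Volberg2024}. For a Banach space $E$ whose dual has type $2$, and whose holomorphic semigroup exponent is $\alpha$ in Pisier's sense \cite{Pisier1982}, that result bounds $\||\nabla F|_E\|_{L^p}$ by $C d^{a_{p,E}}\|F\|_{L^p(\Omega_n;E)}$ for $F\in\mathcal P_d(\Omega_n;E)$. Here $C$ depends only on $p$, $T_2(E^*)$, and the holomorphy data, meaning the angle and the bound of the analytic extension of the heat semigroup on $L^p(E)$. I apply this with $E=\ell_2^M(X)$. Note that $|\nabla F|_E=G^{\mathrm{ord}}_{1,\ell_2^M(X)}F$.

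The uniformity check proceeds as follows. First, $(\ell_2^M(X))^*=\ell_2^M(X^*)$ isometrically. By Fubini, this space has type $2$ with $T_2(\ell_2^M(X^*))\le T_2(X^*)$. Indeed, for $y_1,\dots,y_m\in\ell_2^M(X^*)$ we have $\E\|\sum_j\eta_jy_j\|^2=\sum_{i\le M}\E\|\sum_j\eta_jy_j(i)\|_{X^*}^2\le T_2(X^*)^2\sum_j\|y_j\|^2$. Second, the holomorphy exponent $\alpha$ is uniform over $M$ by the statement's own assumption. Alternatively, one can derive this from $K$-convexity: type $2$ of $X^*$ makes $X$ $K$-convex, and $K(\ell_2^M(X))\lesssim K(X)$, because the Rademacher projection acts coordinatewise and the norms combine in $\ell_2$ after applying Kahane--Khintchine. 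Pisier's theorem then gives holomorphy data controlled by $K(X)$ alone. With these two facts, the constant in the first order estimate is some $C_{p,X}$ independent of $M$, $n$, and $d$. The exponent also matches $a_{p,X}$ in both ranges of $p$.

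The main obstacle is making sure that the constant in the first order estimate of \cite{Volberg2024} depends only on $T_2(E^*)$, $p$, and the holomorphy data, and not on any other feature of $E$. If the source result is phrased with an implicit constant, I would trace its proof. That proof uses Pisier's holomorphic semigroup estimates, a Bernstein inequality for analytic functions in the sector, and type $2$ of the dual applied through a duality argument against the square gradient. Each of these steps depends only on the quantities listed above. Once that is confirmed, applying Theorem~\ref{thm:bm-iteration} with $a=a_{p,X}$ and $C=C_{p,X}$ gives the corollary, with constant $C_{p,X}^k$ and exponent $ka_{p,X}$.
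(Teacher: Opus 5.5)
Your proposal is correct and is essentially the paper's proof: reduce to Theorem~\ref{thm:bm-iteration}, then verify \eqref{eq:BM1} uniformly in $M$ by applying Volberg's first-order estimate to $\ell_2^M(X)$. Uniformity comes from the Fubini computation $T_2(\ell_2^M(X^*))\le T_2(X^*)$ and from the coordinatewise action of the Rademacher projection, which keeps the $K$-convexity (hence Pisier holomorphy) data independent of $M$; as in the paper, the remaining point taken on trust is that the constant in Volberg's Theorem 4.1 depends only on $p$, the type-$2$ constant of the dual, and the holomorphy data, which you correctly single out as the step to check.
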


%\begin{remark}
%The corollary \eqref{cor:volberg-higher-order-bm} is an iteration statement, its content is that no additional Banach-space geometry is needed to pass from the first-order estimate for $|\nabla f|_X$ to higher-order Hilbertian square functions: the only extra assumption is uniformity under finite $\ell_2$-sums, which is built into the proof under the hypothesis that $X^*$ has type $2$.
%\end{remark}

\begin{proof}
Volberg's Theorem 4.1 gives the first order estimate with exponent $a_{p,X}$ for $X$-valued functions \cite{Volberg2024}.  The same proof applies to $Y_M:=\ell_2^M(X)$ with constants independent of $M$.  Indeed,
$Y_M^*=\ell_2^M(X^*)$, and if $X^*$ has constant $T$, then $Y_M^*$ has the same type-$2$ constant: for $u_j=(u_{j,m})_{m=1}^M\in\ell_2^M(X^*)$,
\[
\begin{aligned}
 \E_\eps\left\|\sum_j\eps_ju_j\right\|_{\ell_2^M(X^*)}^2
 &=\sum_{m=1}^M
 \E_\eps\left\|\sum_j\eps_ju_{j,m}\right\|_{X^*}^2                    \\
 &\leq T^2\sum_j\sum_{m=1}^M\|u_{j,m}\|_{X^*}^2.
\end{aligned}
\]
Also, the $K$-convexity constants entering Pisier's holomorphic semigroup theorem are stable under finite Hilbertian sums: with the usual $L^2$ definition of $K$-convexity, the Rademacher projection acts coordinatewise on $L^2(\Omega;\ell_2^M(X))$, which is isometric to $\ell_2^M(L^2(\Omega;X))$.  Hence the Pisier angle and the constants in Volberg's first order estimate may be chosen uniformly in $M$.

Thus assumption \eqref{eq:BM1} of Theorem~\ref{thm:bm-iteration} holds with $C=C_{p,X}$ and $a=a_{p,X}$.  The claimed estimates follow immediately from that theorem.
\end{proof}

\section*{Acknowledgments}
The authors would like to express their gratitude to Alexander Volberg for his encouragement and generous support. Yiming Chen was supported by  the New Cornerstone Science Foundation (4801500068).

\end{document}